\newcommand*{\di}{\Diamond}
\newcommand*{\bo}{\square}
\newcommand*{\F}{\mathcal{F}}
\newcommand*{\M}{\mathcal{M}}
\newcommand*{\A}{\mathcal{A}}
\renewcommand*{\AA}{\mathbb{A}}
\newcommand*{\B}{\mathcal{B}}
\newcommand*{\LL}{\mathcal{L}}
\newcommand*{\BB}{\mathbb{B}}
\newcommand*{\MM}{\mathrm{Mod}}
\newcommand*{\NN}{\mathbb{N}}
\newcommand*{\VV}{\mathbb{V}}
\newcommand*{\EE}{\mathbb{E}}
\newcommand*{\G}{\mathcal{G}}
\newcommand*{\EF}[1]{\mathrm{FS}_{#1}}
\newcommand*{\ML}{\mathrm{ML}}
\newcommand*{\FO}{\mathrm{FO}}
\newcommand*{\uTL}{\mathrm{unary\text{-}TL}}
\newcommand*{\PAL}{\mathrm{PAL}}
\newcommand*{\EL}{\mathrm{EL}}
\renewcommand*{\phi}{\varphi}
\renewcommand*{\theta}{\vartheta}
\newcommand*{\tower}{\mathrm{twr}}
\newcommand*{\dom}{\mathrm{dom}}
\newcommand*{\powerset}{\mathcal{P}}
\newcommand*{\kaikki}{\square}
\newcommand*{\seur}[2]{\bo(#1,#2)}
\newcommand*{\s}{\mathrm{s}}
\newcommand*{\ms}{\mathrm{ms}}
\newcommand*{\cs}{\mathrm{cs}}
\newcommand*{\bisim}{\leftrightarroweq}
\newcommand*{\liitos}{\textstyle{\bigtriangleup}}
\newcommand*{\win}{(\mathrm{win})}
\newcommand*{\sep}{(\mathrm{sep})}
\newcommand*{\ordo}{\mathcal{O}}
\begin{document}

\begin{frontmatter}
  \title{The Succinctness of First-order Logic over Modal Logic via a Formula Size Game}
  \author{Lauri Hella} \author{Miikka Vilander}
  \address{School of Information Sciences \\ University of Tampere}
  
\begin{abstract}
We propose a new version of formula size game for modal logic. The game characterizes
the equivalence of pointed Kripke-models up to formulas of given numbers of
modal operators and binary connectives. Our 
game is similar to the well-known Adler-Immerman game. However, due to a crucial difference
in the definition of positions of the game, its winning condition is simpler, and the 
second player (duplicator) does not have a trivial optimal strategy. Thus, unlike
the Adler-Immerman game, our game is a genuine two-person game.
We illustrate the use of the game by proving a nonelementary succinctness gap
between bisimulation invariant first-order logic $\FO$ and (basic) modal logic $\ML$. 
\end{abstract}

  \begin{keyword}
  Succinctness, formula size game, bisimulation invariant first-order logic, $n$-bisimulation.
  \end{keyword}
 \end{frontmatter}


\section{Introduction}

Succinctness is an important research topic that has been quite active in modal logic for the
last couple of decades; see, e.g., 
\cite{fo2,lutzsattlerwolter,past,markey,immermangames,lutz,figueira} for earlier work on this topic and \cite{epistemic,multimodalsuccinctness,frenchiliev,iliev,unionsuccinctness,syntaxtrees} 
for recent research.
If two logics $\LL$ and $\LL'$ have equal expressive power,  it is natural to ask, whether
there are properties that can be expressed in $\LL$ by a substantially shorter formula
than in $\LL'$ (or vice versa). For example, $\LL$ is \emph{exponentially more succinct} than
$\LL'$, if for every integer $n$ there is an $\LL$-formula 
$\phi_n$ of length $\ordo(n)$ such that any equivalent $\LL'$-formula $\psi_n$
is of length at least $2^n$.

Often such a gap in succinctness comes together with a similar gap in the complexity of 
the logics. For example, Etessami, Vardi and Wilke \cite{fo2} proved that, over $\omega$-words,
the two-variable fragment $\FO^2$ of first-order logic has the same expressive power 
as $\uTL$ (a weak version of temporal logic), but $\FO^2$ is exponentially more 
succinct than $\uTL$, and furthermore, the complexity of satisfiability for $\FO^2$
is NEXPTIME-complete, while the complexity of $\uTL$ is in NP \cite{sistla}.
However, succinctness does not always lead to a penalty in terms of complexity: 
an example is public announcement logic $\PAL$ which is exponentially more succinct
than epistemic logic $\EL$, but both have the same complexity,
as proved by Lutz in \cite{lutz}.

In order to prove succinctness results we need a method for proving lower bounds for the
length of formulas expressing given properties. The two most common methods 
used in the recent literature are  the \emph{formula size game} introduced by Adler and Immerman 
\cite{immermangames}, and \emph{extended syntax trees} due to Grohe and 
Schweikardt \cite{linearFO}. The latter was inspired by the former, and in fact, an extended
syntax tree is essentially a witness for the existence of a winning strategy in the
Adler-Immerman game. Thus, these two methods are equivalent, and the choice between
them is often a matter of convenience.

Originally, Adler and Immerman \cite{immermangames} formulated their game 
for the branching-time temporal logic $\mathrm{CTL}$. They used it for proving 
an $n!$ lower bound on the size of $\mathrm{CTL}$-formulas for expressing that 
there is a path on which each of the propositions $p_1,\ldots,p_n$ is true. As it is straightforward
to express this property by a formula of  $\mathrm{CTL}^+$ of size linear in $n$,
their result established that $\mathrm{CTL}^+$ is $n!$ times more succinct than 
$\mathrm{CTL}$, thus improving an earlier exponential succinctness result of Wilke 
\cite{ctl}.

After its introduction in \cite{immermangames}, the Adler-Immerman game, as well as 
the method of extended syntax trees, has been adapted to a host of modal languages.
These include epistemic logic \cite{epistemic}, multimodal logics with union and intersection
operators on modalities \cite{multimodalsuccinctness} and modal logic with contingency
operator \cite{syntaxtrees}, among others.

The Adler-Immerman game can be seen as a variation of the Ehrenfeucht-Fra\"{\i}ss{\'e} game,
or, in the case of modal logics, the bisimulation game. In the Adler-Immerman game, 
quantifier rank (or modal depth) is replaced by a parameter, usually called formula size, 
that is closely related to the length of the formula. Moreover, in order to use the game
for proving that a property is not definable by a formula of a given size, it is necessary
to play the game on pair $(\AA,\BB)$ of sets of structures instead of just a pair of single  
structures. 

The basic idea of the Adler-Immerman game is that one of the players, S (spoiler), 
tries to show that the sets $\AA$ and $\BB$ can be separated by
a formula of size $n$, while the other player, D (duplicator), aims to show that no 
formula of size at most $n$ suffices for this.
The moves that S makes in the game reflect directly the logical operators in a formula
that is supposed to separate the sets $\AA$ and $\BB$. Any pair $(\sigma,\delta)$ of 
strategies for the players S and D produces a finite game tree $T_{\sigma,\delta}$, and 
S wins this play if the size of $T_{\sigma,\delta}$ is at most $n$.
The strategy $\sigma$ is a winning strategy for S if using it, S wins every play of the game. 
If this is the case, then there is a formula of size at most $n$ that separates the sets, 
and this formula can actually be read from the strategy $\sigma$.

A peculiar feature of the Adler-Immerman game is that the second player, duplicator, 
can be completely eliminated from it. This is because D has an optimal strategy $\delta_{\max}$,
which is to always choose the maximal allowed answer; this strategy guarantees that 
the size of the tree $T_{\sigma,\delta}$ is as large as possible.  Thus, in this sense 
the Adler-Immerman game is not a genuine two-person game, but rather a one-person
game.

In the present paper, we propose another type of formula size game for modal logic.
Our game is a natural adaptation of the game introduced by Hella and V\"a\"an\"anen 
\cite{formulasize} for propositional logic and first-order logic. 
The basic setting in our game is the same as in the Adler-Immerman game: there are
two players, S and D, and two sets of structures that S claims can
be separated by a formula of some given size. The crucial difference is that in our
game we define positions to be tuples $(m,k,\AA,\BB)$ instead of just pairs $(\AA,\BB)$
of sets of structures, where $m$ and $k$ are parameters referring to the number of 
modal operators and binary connectives in a formula. In each move S has to decrease
at least one of the parameters $m$ or $k$. The game ends when the players reach a
position $(m^*,k^*,\AA^*,\BB^*)$ such that either there is a literal separating 
$\AA^*$ and $\BB^*$, or
$m^*=k^*=0$. In the former case, S wins the play; otherwise D wins.

Thus, in contrast to the Adler-Immerman game, to determine the winner in our game 
it suffices to consider a single ``leaf-node"
$(m^*,k^*,\AA^*,\BB^*)$ of the game tree. This also means that our game is a real two-person 
game: the final position $(m^*,k^*,\AA^*,\BB^*)$ of a play depends on the moves of D, and
there is no simple optimal strategy for D that could be used for eliminating the role of D
in the game. 

We believe that our game is more intuitive and thus, in some cases it may be easier to use than
the Adler-Immerman game. On the other hand, it should be remarked that the two games are 
essentially equivalent: The moves corresponding to connectives and modal operators
are the same in both games (when restricting to the sets $\AA$ and $\BB$ in a position
$(m,k,\AA,\BB)$). Hence, in principle, it is possible to translate a winning strategy in one
of the games to a corresponding winning strategy in the other. 

We illustrate the use of our game by proving a nonelementary succinctness gap
between first-order logic $\FO$ and (basic) modal logic $\ML$. More precisely, we define 
a bisimulation invariant property of pointed Kripke models by a first-oder formula 
of size $\ordo(2^n)$, and show that this property cannot be defined by any $\ML$-formula
of size less than the exponential tower of height  $n-1$. 

A similar gap between $\FO$ and temporal logic follows from a construction in the PhD thesis
\cite{stockmeyer} of Stockmeyer. He proved that the satisfiability problem of $\FO$
over words is of nonelementary complexity. Etessami and Wilke \cite{untilhierarchy}
observed that from Stockmeyer's proof it is possible to extract $\FO$-formulas of size 
$\ordo(n)$ whose smallest models are words of length nonelementary in $n$.  
On the other hand, it is well known that any satisfiable formula of temporal logic
has a model of size $\ordo(2^n)$, where $n$ is the size of the formula.


\section{Preliminaries}

In this section we fix some notation, define the syntax and semantics of basic modal logic and define our notions of formula size.
For a detailed account on the notions used in the paper, we refer to the textbook
\cite{blackburn} of Blackburn, de Rijke and Venema.

\subsection*{Basic modal logic and first-order logic}

Let $\M = (W, R, V)$, where $W$ is a set, $R \subseteq W \times W$ and $V : \Phi \to \powerset(W)$, and let $w \in W$. The structure $(\M, w)$ is called a \emph{pointed Kripke-model for $\Phi$}. 

Let $(\M, w)$ be a pointed Kripke-model. We use the notation
\[
\seur{\M}{w} := \{ (\M, v) \mid v \in M, wR^\M v\}.
\]
If $\AA$ is a set of pointed Kripke-models, we use the notation
\[
\kaikki\AA := \bigcup\limits_{(\M, w) \in \AA} \seur{\M}{w}.
\]
Furthermore, if $f$ is a function $f : \AA \to \kaikki\AA$ such that $f(\M, w) \in \seur{\M}{w}$ for every $(\M, w) \in \AA$, then we use the notation
\[
\di_f\AA := f(\AA).
\]

Now we define the syntax and semantics of basic modal logic for pointed models. 

\begin{definition}
Let $\Phi$ be a set of proposition symbols. The set of formulas of 
~$\ML(\Phi)$
is generated by the following grammar
\[
\varphi := p \mid \neg p \mid (\varphi \land \varphi ) \mid ( \varphi 
\lor \varphi )
\mid \Diamond \varphi \mid \Box \varphi,
\]
where $p \in \Phi$.
\end{definition}

As is apparent from the definition of the syntax, we assume that all $\ML$-formulas are in negation normal form. This is useful for the formula size game that we introduce in the next section.

\begin{definition}
The satisfaction relation $(\M, w) \vDash \phi$ between pointed Kripke-models $(\M, w)$
$\ML(\Phi)$-formulas $\phi$ is defined as follows:
\begin{enumerate}[(1)]
\item $(\M, w) \vDash p \Leftrightarrow w \in V(p)$,
\item $(\M, w) \vDash \neg p \Leftrightarrow w \notin V(p)$,
\item $(\M, w) \vDash (\phi \land \psi) \Leftrightarrow (\M, w) \vDash \phi$ and $(\M, w) \vDash \psi$,
\item $(\M, w) \vDash (\phi \lor \psi) \Leftrightarrow (\M, w) \vDash \phi$ or $(\M, w) \vDash \psi$,
\item $(\M, w) \vDash \di\phi \Leftrightarrow$ there is $(\M, v) \in \seur{\M}{w}$ such that $(\M, v) \vDash \phi$,
\item $(\M, w) \vDash \bo\phi \Leftrightarrow$ for every $(\M, v) \in \seur{\M}{w}$ it holds that $(\M, v) \vDash \phi$.
\end{enumerate}
Furthermore, if $\AA$ is a class of pointed Kripke-models, then 
\begin{enumerate}
\item[] $\AA \vDash \phi \Leftrightarrow (\A, w) \vDash \phi$ for every $(\A, w) \in \AA$.
\end{enumerate}
\end{definition}

In Section \ref{nelonen}, we also consider the case $\Phi=\emptyset$. For this purpose,
we add the atomic constants $\top$ and $\bot$ to $\ML$, where $(\M, w) \vDash \top$ and
$(\M, w) \nvDash \bot$ for all pointed Kripke models $(\M, w)$.

The syntax and semantics for first-order logic are defined in the standard way. 
Each $\ML$-formula $\phi$ defines a class $\MM(\phi)$ of pointed Kripke-models:
\[
	\MM(\phi):=\{(\M,w)\mid (\M, w) \vDash \phi\}.
\] 
In the same way, any $\FO$-formula $\psi(x)$ in the vocabulary consisting of the accessibility
relation symbol $R$ and unary relation symbols $U_p$ for $p\in\Phi$ defines
a class $\MM(\psi)$ of pointed Kripke-models:
\[
	\MM(\psi):=\{(\M,w)\mid \M \vDash \psi[w/x]\}.
\] 
The formulas $\phi\in\ML$ and $\psi(x)\in\FO$ are \emph{equivalent} if $\MM(\phi)=\MM(\psi)$.

The well-known link between $\ML$ and $\FO$ is the following theorem.

\begin{theorem}[van Benthem Characterization Theorem]
A first-order formula $\psi(x)$ is equivalent to some formula in $\ML$ if and only if $\MM(\psi)$ is bisimulation invariant.
\end{theorem}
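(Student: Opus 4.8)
The plan is to prove the two directions separately, with essentially all of the work in the ``only if'' direction. For the easy ``if'' direction, I would first establish by a routine induction on the structure of $\ML(\Phi)$-formulas that truth of a modal formula at a pointed Kripke-model is invariant under bisimulation --- the atomic and Boolean cases being immediate, and the $\di$ and $\bo$ cases using the back-and-forth conditions of a bisimulation. Given this, if $\psi(x)$ is equivalent to some $\phi\in\ML$, then $\MM(\psi)=\MM(\phi)$ is bisimulation invariant.

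For the ``only if'' direction, assume $\MM(\psi)$ is bisimulation invariant and let $\Phi$ be the finite set of proposition symbols occurring in $\psi$. Write $\phi\mapsto\phi^x$ for the standard translation of $\ML(\Phi)$ into $\FO$, and consider the set of \emph{modal consequences} of $\psi$,
\[
  \mathrm{MC}(\psi):=\{\,\phi^x\mid\phi\in\ML(\Phi),\ \models\forall x(\psi(x)\to\phi^x)\,\}.
\]
The key claim is that $\mathrm{MC}(\psi)\models\psi(x)$. Granting this, compactness yields finitely many $\phi_1,\dots,\phi_k\in\ML(\Phi)$ with $\phi_1^x\wedge\cdots\wedge\phi_k^x\models\psi(x)$; since each $\phi_i$ is a consequence of $\psi$ by construction, $\psi$ is then equivalent to $(\phi_1\wedge\cdots\wedge\phi_k)^x$, and rewriting $\phi_1\wedge\cdots\wedge\phi_k$ in negation normal form gives the required $\ML$-formula.

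To prove the claim, take a pointed model $(\M,w)\vDash\mathrm{MC}(\psi)$. First I would show, by compactness, that the $\FO$-theory $\Gamma:=\{\psi(x)\}\cup\{\phi^x\mid\phi\in\ML(\Phi),\ (\M,w)\vDash\phi^x\}$ is satisfiable: an unsatisfiable finite fragment would produce a modal conjunction $\phi$ with $(\M,w)\vDash\phi^x$ but $\psi\models(\neg\phi)^x$, so that $(\neg\phi)^x\in\mathrm{MC}(\psi)$, a contradiction. A model $(\mathcal{N},v)$ of $\Gamma$ then satisfies $\psi$ and has the same $\ML(\Phi)$-theory as $(\M,w)$. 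Passing to $\omega$-saturated elementary extensions $(\M^+,w)\succeq(\M,w)$ and $(\mathcal{N}^+,v)\succeq(\mathcal{N},v)$ preserves both facts, these being first-order. The crucial step is a Hennessy--Milner-type theorem: two $\omega$-saturated pointed models with the same $\ML(\Phi)$-theory are bisimilar. One verifies that the relation ``same $\ML(\Phi)$-theory'' satisfies the back-and-forth conditions, using $\omega$-saturation to realize at a successor world the conjunction of any finitely many modal constraints forced on a matching successor. Hence $(\M^+,w)\bisim(\mathcal{N}^+,v)$, so bisimulation invariance of $\MM(\psi)$ together with $(\mathcal{N}^+,v)\vDash\psi$ gives $(\M^+,w)\vDash\psi$ and therefore $(\M,w)\vDash\psi$.

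The main obstacle is the Hennessy--Milner argument for $\omega$-saturated models, where the appeal to saturation must be organized with some care; the two uses of compactness and the induction for the easy direction are routine. I would also flag that restricting to the finite signature $\Phi$ is essential for the final conjunction to be finite, and that the modal formula produced can even be taken of modal depth at most the quantifier rank of $\psi$, although that refinement is not needed for the statement as given.
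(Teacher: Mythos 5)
The paper states the van Benthem Characterization Theorem without proof, citing the textbook of Blackburn, de Rijke and Venema, and your argument is exactly the standard proof given there: the easy direction by induction on $\ML$-formulas, and the hard direction via the set of modal consequences, two applications of compactness, and the Hennessy--Milner property of $\omega$-saturated elementary extensions. The proof is correct as sketched (your closing aside about bounding the modal depth by the quantifier rank would need a separate argument, but you rightly note it is not needed here).
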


If a property of pointed Kripke-models is $n$-bisimulation invariant for some $n\in\NN$, then it is also bisimulation invariant.  Thus, $\FO$-definability and $n$-bisimulation invariance imply 
$\ML$-definability for any property of pointed Kripke-models. We will use this version of 
van Benthem's characterization in Section \ref{sec-property} for showing that certain property
is $\ML$-definable.
For the sake of easier reading, we give here the definition of $n$-bisimulation.

\begin{definition}
Let $(\M, w)$ and $(\M', w')$ be pointed models. We say that $(\M, w)$ and $(\M', w')$ are \emph{$n$-bisimilar}, $(\M, w) \bisim_n (\M', w')$, if there
are binary relations $Z_n \subseteq \dots \subseteq Z_0$ such that for every $0 \leq i \leq n-1$ we have
\begin{enumerate}[(1)]
\item $(\M,w)Z_n(\M', w')$,
\item if $(\M, v)Z_0(\M',v')$, then $v$ and $v'$ are propositionally equivalent,
\item if $(\M,v) Z_{i+1} (\M',v')$ and $(\M, u) \in \seur{\M}{v}$ then there is $(\M', u') \in \seur{\M'}{v'}$ such that $(\M, u) Z_i (\M', u')$,
\item if $(\M,v) Z_{i+1} (\M',v')$ and $(\M', u') \in \seur{\M'}{v'}$ then there is $(\M, u) \in \seur{\M}{v}$ such that $(\M, u) Z_i (\M', u')$.
\end{enumerate}
\end{definition}

It is well known that two pointed
Kripke-models are $n$-bisimilar if and only if they are equivalent with respect to $\ML$-formulas of modal depth at most $n$.

\subsection*{Formula size}

We define notions of formula size for $\ML$ and $\FO$. These notions are related to the length of the formula as a string rather than the DAG-size\footnote{The DAG-size of a formula $\phi$
is the size of the syntactic structure of $\phi$ in the form of a DAG. Thus, it is less than 
$n^2$, where $n$ is the number of subformulas of $\phi$.}of it
For $\ML$ we define separately the number of modal operators and the number of binary connectives in the formula.

\begin{definition}
The \emph{modal size} of a formula $\phi \in \ML$, denoted $\ms(\phi)$, is defined recursively as follows:
\begin{enumerate}[(1)]
\item If $\phi$ is a literal, then $\ms(\phi) = 0$.
\item If $\phi = \psi \lor \vartheta$ or $\phi = \psi \land \vartheta$, then $\ms(\phi) = \ms(\psi) + \ms(\vartheta)$.
\item If $\phi = \di\psi$ or $\phi = \bo\psi$, then $\ms(\phi) = \ms(\psi) + 1$.
\end{enumerate}
\end{definition}

\begin{definition}
The \emph{binary connective size} of a formula $\phi \in \ML$, denoted by $\cs(\phi)$, is defined recursively as follows:
\begin{enumerate}[(1)]
\item If $\phi$ is a literal, then $\cs(\phi) = 0$.
\item If $\phi = \psi \lor \vartheta$ or $\phi = \psi \land \vartheta$, then $\cs(\phi) = \cs(\psi) + \cs(\vartheta)+1$.
\item If $\phi = \di\psi$ or $\phi = \bo\psi$, then $\cs(\phi) = \cs(\psi)$.
\end{enumerate}
\end{definition}

The size of an $\ML$ formula is defined as the sum of modal size and connective size. We do not count literals or parentheses since their number can be
derived from the number of binary connectives. 

\begin{definition}
The \emph{size} of a formula $\phi \in \ML$ is $\s(\phi) = \ms(\phi) + \cs(\phi)$.
\end{definition}

Similarly we define formula size for $\FO$ to be the number of binary connectives and quantifiers in the formula. In general this could lead to an arbitrarily
large difference between formula size and actual string length, but we only consider formulas with one binary relation so this is not an issue.

\begin{definition}
The \emph{size} of a formula $\phi \in \FO$, denoted by $\s(\phi)$, is defined recursively as follows:
\begin{enumerate}[(1)]
\item If $\phi$ is a literal, then $\s(\phi) = 0$.
\item If $\phi = \neg\psi$, then $\s(\phi) = \s(\psi)$.
\item If $\phi = \psi \lor \vartheta$ or $\phi = \psi \land \vartheta$, then $\s(\phi) = \s(\psi) + \s(\vartheta) + 1$.
\item If $\phi = \exists x\psi$ or $\phi = \forall x \psi$, then $\s(\phi) = \s(\psi) + 1$.
\end{enumerate}
\end{definition}

To refer to some rather large formula sizes we need the exponential tower function.

\begin{definition}
We define the function $\tower : \Nset \to \Nset$ recursively as follows:
\begin{align*}
\tower(0) &= 1 \\
\tower(n+1) &= 2^{\tower(n)}.
\end{align*}
We will also use in the sequel the binary logarithm function, denoted by $\log$.
\end{definition}

\subsection*{Separating classes by formulas}

The definition of the formula size game in the next section is based on the notion
of separating classes of pointed Kripke-models by formulas.

\begin{definition}
Let $\AA$ and $\BB$ be classes of pointed Kripke-models. 
\\
(a) We say that a formula $\phi \in \ML$ \emph{separates the classes $\AA$ and $\BB$} if 
$\AA \vDash \phi$ and $\BB \vDash \neg\phi$. 
\\
(b) Similarly, a formula $\psi(x) \in \FO$ separates the classes $\AA$ and $\BB$ if  
for all $(\M,w)\in \AA$, $ \M\vDash \psi[w/x]$ and for all $(\M,w)\in \BB$, $ \M\vDash \neg\psi[w/x]$.
\end{definition}

In other words, a formula $\phi\in\ML$ separates the classes $\AA$
and $\BB$ if $\AA\subseteq\MM(\phi)$ and $\BB\subseteq \overline{\MM(\phi)}$, where
$\overline{\MM(\phi)}$ is the complement of $\MM(\phi)$.

\section{The formula size game}

As in the Adler-Immerman game, the basic idea in our formula size game is that
there are two players, S (spoiler) and D (duplicator), who play on a pair $(\AA,\BB)$ of two
sets of pointed Kripke-models. The aim of S is to show that $\AA$ and $\BB$
can be separated by a formula with modal size at most $m$ and connective size
at most $k$, while D tries to refute this. The moves of S reflect the 
connectives and modal operators of a formula that is supposed to separate the sets. 

The crucial difference between our game and the Adler-Immerman game is that 
we define positions in the game to be tuples $(m,k,\AA,\BB)$ instead of just
pairs $(\AA,\BB)$. This means that in the connective moves, D has a genuine 
choice to make. Furthermore, the winning condition of the game is based
on a natural property of single positions instead of the size of the entire 
game tree. 

We give now the precise definition of our game.

\begin{definition}
Let $\AA_0$ and $\BB_0$ be sets of pointed Kripke-models and let $m_0,k_0\in\NN$. \emph{The formula size game between the sets $\AA_0$ and $\BB_0$}, denoted
$\EF{m_0,k_0}(\AA_0, \BB_0)$, has two players, S and D. The number $m_0$ is the \emph{modal parameter} and $k_0$ is the \emph{connective parameter} of the game. The
starting position of the game is $(m_0,k_0,\AA_0, \BB_0)$. Let the position after $n$ moves be $(m, k, \AA, \BB)$. To continue the game, S has the following four moves to
choose from:
\begin{itemize}
\item \emph{Left splitting move}: First, S chooses natural numbers $m_1$, $m_2$, $k_1$ and $k_2$ and sets $\AA_1$ and $\AA_2$ such that 
$m_1+m_2=m$, $k_1+k_2+1 = k$ and $\AA_1 \cup \AA_2 = \AA$. Then D decides whether the game continues from the position $(m_1,k_1, \AA_1, \BB)$ or the position 
$(m_2, k_2, \AA_2, \BB)$.

\item \emph{Right splitting move}: First, S chooses natural numbers $m_1$, $m_2$, $k_1$ and $k_2$ and sets $\BB_1$ and $\BB_2$ such that 
$m_1+m_2=m$, $k_1+k_2+1 = k$ and $\BB_1 \cup \BB_2 = \BB$. Then D decides whether the game continues from the position
$(m_1, k_1, \AA, \BB_1)$ or the position $(m_2, k_2, \AA, \BB_2)$.

\item \emph{Left successor move}: S chooses a function $f: \AA \to \kaikki\AA$ such that $f(\A, w) \in \seur{\A}{w}$ for all $(\A, w) \in \AA$
and the game continues from the position\\ $(m-1, k, \di_f\AA, \kaikki\BB)$. 

\item \emph{Right successor move}: S chooses a function $g: \BB \to \kaikki\BB$ such that $g(\B, w) \in \seur{\B}{w}$ for all $(\B, w) \in
\BB$ and the game continues from the position\\ $(m-1, k, \kaikki\AA, \di_g\BB)$. 
\end{itemize}
The game ends and S wins in a position $(m, k, \AA, \BB)$ if there is a literal $\phi$ such that $\phi$ separates the sets $\AA$ and
$\BB$.
The game ends and D wins in a position $(m, k, \AA, \BB)$ if $m = k = 0$ and S does not win in this position.
\end{definition}

Note that if $\seur{\M}{w} = \emptyset$ for some $(\M, w) \in \AA$ ($\in\BB$) then S cannot make a left (right) successor move in the position $(m,k,\AA,\BB)$.

We prove now that the formula size game indeed characterizes the separation of 
two sets of pointed Kripke-models by a formula of a given size.

\begin{theorem}\label{peruslause}
Let $\AA$ and $\BB$ be sets of pointed models and let $m$ and $k$ be natural numbers. Then the following conditions are equivalent:
\begin{enumerate}[11111111]
\item[$\win_{m,k}$] S has a winning strategy in the game $\EF{m,k}(\AA, \BB)$.
\item[$\sep_{m,k}$] There is a formula $\phi \in \ML$ such that $\ms(\phi) \le m$, $\cs(\phi) \le k$ and the formula $\phi$ separates the sets $\AA$ and $\BB$.
\end{enumerate}
\end{theorem}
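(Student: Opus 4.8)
The plan is to prove the two implications separately, in each case by induction on $m+k$.

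\medskip
\noindent\textbf{From $\sep_{m,k}$ to $\win_{m,k}$.} Suppose $\phi\in\ML$ separates $\AA$ and $\BB$ with $\ms(\phi)\le m$ and $\cs(\phi)\le k$. I describe S's strategy by induction on the structure of $\phi$ (equivalently on $m+k$). If $\phi$ is a literal, then $\phi$ already separates $\AA$ and $\BB$ in the current position, so S has already won and no move is needed. If $\phi=\psi_1\land\psi_2$, then $\AA\vDash\psi_i$ for $i=1,2$ while $\BB\vDash\neg\psi_1\lor\neg\psi_2$, so $\BB=\BB_1\cup\BB_2$ where $\BB_i=\{(\M,w)\in\BB\mid(\M,w)\vDash\neg\psi_i\}$; S plays the right splitting move with this partition of $\BB$ and with $m_i=\ms(\psi_i)$, $k_i=\cs(\psi_i)$ (padding with zeros so that $m_1+m_2=m$, $k_1+k_2+1=k$, which is possible since $\cs(\psi_1)+\cs(\psi_2)+1=\cs(\phi)\le k$ and similarly for $\ms$). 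Whichever $\BB_i$ D picks, $\psi_i$ separates $\AA$ and $\BB_i$, and we apply the induction hypothesis. The case $\phi=\psi_1\lor\psi_2$ is dual, using a left splitting move to partition $\AA$. If $\phi=\di\psi$, then every $(\M,w)\in\AA$ has a successor satisfying $\psi$; fix a choice function $f$ selecting such a successor, and note also $\BB\vDash\neg\di\psi=\bo\neg\psi$, so every successor of every model in $\BB$ satisfies $\neg\psi$, i.e. $\kaikki\BB\vDash\neg\psi$. Thus $\psi$ separates $\di_f\AA$ and $\kaikki\BB$, and S plays the left successor move with $f$; note $\ms(\psi)=\ms(\phi)-1\le m-1$, so the move is legal. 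The case $\phi=\bo\psi$ is dual via a right successor move. In each case the parameters are large enough for the move, and the separating formula shrinks appropriately, so the induction goes through and S reaches a winning position.

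\medskip
\noindent\textbf{From $\win_{m,k}$ to $\sep_{m,k}$.} Suppose S has a winning strategy $\sigma$ in $\EF{m,k}(\AA,\BB)$. I construct a separating formula $\phi$ with $\ms(\phi)\le m$, $\cs(\phi)\le k$ by induction on $m+k$. If the current position is already winning for S, there is a literal separating $\AA$ and $\BB$; take it as $\phi$ (size $0$). Otherwise consider S's first move under $\sigma$. If it is a left splitting move producing $\AA_1,\AA_2$ with parameters $m_i,k_i$, then for each $i$, S has a winning strategy in $\EF{m_i,k_i}(\AA_i,\BB)$ (namely $\sigma$ restricted to the subgame after D picks side $i$), so by induction there is $\psi_i$ separating $\AA_i$ and $\BB$ with $\ms(\psi_i)\le m_i$, $\cs(\psi_i)\le k_i$. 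Set $\phi=\psi_1\lor\psi_2$: then $\AA=\AA_1\cup\AA_2\vDash\phi$, and $\BB\vDash\neg\psi_i$ for both $i$ gives $\BB\vDash\neg\psi_1\land\neg\psi_2=\neg\phi$; moreover $\ms(\phi)=\ms(\psi_1)+\ms(\psi_2)\le m_1+m_2=m$ and $\cs(\phi)=\cs(\psi_1)+\cs(\psi_2)+1\le k_1+k_2+1=k$. A right splitting move is handled dually with a conjunction. If S's first move is a left successor move with choice function $f$, then S has a winning strategy in $\EF{m-1,k}(\di_f\AA,\kaikki\BB)$, so by induction there is $\psi$ separating $\di_f\AA$ and $\kaikki\BB$ with $\ms(\psi)\le m-1$, $\cs(\psi)\le k$. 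Put $\phi=\di\psi$: for $(\M,w)\in\AA$, $f(\M,w)\in\seur{\M}{w}$ satisfies $\psi$, so $(\M,w)\vDash\di\psi$; for $(\M,w)\in\BB$, every successor lies in $\kaikki\BB$ hence satisfies $\neg\psi$, so $(\M,w)\vDash\bo\neg\psi=\neg\di\psi$; and $\ms(\phi)=\ms(\psi)+1\le m$, $\cs(\phi)=\cs(\psi)\le k$. A right successor move gives $\phi=\bo\psi$ dually. This completes the induction.

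\medskip
\noindent\textbf{Main obstacle.} The recursion is structurally clean, so the real care is in the bookkeeping: checking that S's legal moves match exactly the recursive conditions on $\ms$ and $\cs$ (in particular that a splitting move correctly "spends" one unit of the connective parameter and that padding unused parameters with $0$ is always possible), and that the duplicator's choice in a splitting move never lets D escape — which is guaranteed because the constructed disjunction/conjunction is separating no matter which side D selects. One subtlety worth stating explicitly in the successor case is the semantic fact $\BB\vDash\neg\di\psi \iff \kaikki\BB\vDash\neg\psi$ (and dually), i.e. that pushing a box/diamond through the $\kaikki$ operator is sound; this, together with the analogous handling of the empty-successor situation noted after the definition of the game (if some $(\M,w)\in\AA$ has $\seur{\M}{w}=\emptyset$ then no left successor move is available, matching the impossibility of $\di\psi$ being separating there), is all that is needed. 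Everything else is routine induction.
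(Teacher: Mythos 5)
Your proposal is correct and follows essentially the same route as the paper: induction on $m+k$, with the $\sep\Rightarrow\win$ direction arguing by cases on the outermost operator of $\phi$ (disjunction/conjunction matched to left/right splitting moves via the satisfaction-based partition of $\AA$ resp.\ $\BB$, and $\di$/$\bo$ matched to left/right successor moves via a choice function), and the $\win\Rightarrow\sep$ direction arguing by cases on S's first move and assembling the formula from the subgames. The bookkeeping points you flag (padding the parameters, legality of moves, and the equivalence $\BB\vDash\neg\di\psi$ iff $\kaikki\BB\vDash\neg\psi$) are exactly the ones the paper's proof checks.
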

\begin{proof}
The proof proceeds by induction on the number $m+k$. If $m+k = 0$, no moves can be made. Thus if S wins, then there is a literal $\phi$ that
separates the sets $\AA$ and $\BB$. In this case $\s(\phi) = 0$ so $\win_{0,0} \Rightarrow \sep_{0,0}$. On the other hand, if there is a formula $\phi$ such
that $\s(\phi) \le 0$ and $\phi$ separates the sets $\AA$ and $\BB$, then $\phi$ is a literal. Thus S wins the game, and we see that $\sep_{0,0} \Rightarrow \win_{0,0}$.

Suppose then that $m+k > 0$ and $\win_{n,l} \Leftrightarrow \sep_{n,l}$ for all $n,l \in \Zset_+$ such that $n+l < m+k$. Assume first that $\win_{m,k}$ holds.
Consider the following cases according to the first move in the winning strategy of S.
\begin{enumerate}[(a)]
\item Assume that the first move of the winning strategy of S is a left splitting move choosing numbers $m_1, m_2, k_1, k_2 \in \Nset$ such that $m_1+m_2 = m$ and
$k_1+k_2+1=k$, and sets $\AA_1, \AA_2 \subseteq \AA$ such that $\AA_1 \cup \AA_2 = \AA$. Since this move is given by a winning strategy, S has a
winning strategy for both possible continuations of the game, $(m_1, k_1, \AA_1, \BB)$ and $(m_2, k_2, \AA_2, \BB)$. Since $m_i + k_i < m_i+k_i + 1 \le m+k$
for $i \in \{1,2\}$, by induction hypothesis there is a formula $\psi$ such that $\ms(\psi) \le m_1$, $\cs(\psi) \le k_1$ and $\psi$ separates the sets
$\AA_1$ and $\BB$ and a formula $\vartheta$ such that $\ms(\vartheta) \le m_2$, $\cs(\vartheta) \le k_2$ and $\vartheta$ separates the sets $\AA_2$ and
$\BB$. Thus $\AA_1 \vDash \psi$ and $\AA_2 \vDash \vartheta$ so $\AA \vDash \psi \lor \vartheta$. On the other hand $\BB \vDash \neg\psi$ and $\BB \vDash
\neg\vartheta$ so $\BB \vDash \neg(\psi \lor \vartheta)$. Therefore the formula $\psi \lor \vartheta$ separates the sets $\AA$ and $\BB$. In addition
$\ms(\psi \lor \vartheta) = \ms(\psi) + \ms(\vartheta) \le m_1+m_2 = m$ and $\cs(\psi \lor \vartheta) = \cs(\psi) + \cs(\vartheta) + 1 \le k_1+k_2+1 = k$ so
$\sep_{m,k}$ holds.

\item The case in which the first move of the winning strategy of S is a right splitting move
is proved in the same way as the previous one, with the roles of $\AA$ and $\BB$
switched, and disjunction replaced by conjunction.

\item Assume that the first move of the winning strategy of S is a left successor move choosing a function $f: \AA \to \kaikki\AA$ such that $f(\A, w) \in \seur{\A}{w}$ for all
 $(\A, w) \in \AA$. The game continues from the position $(m-1, k, \di_f\AA, \kaikki\BB)$ and S has a winning strategy from this position. By induction 
hypothesis there is a formula $\psi$ such that $\ms(\psi) \le m-1$, $\cs(\psi) \le k$ and $\psi$ separates the sets $\di_f\AA$ and $\kaikki\BB$. Now for every $(\A, w)
\in \AA$ we have $f(\A, w) \in \seur{\A}{w}$ and $f(\A, w) \vDash \psi$. Therefore $\AA \vDash \di\psi$. On the other hand $\kaikki\BB \vDash \neg\psi$ so for every $(\B, w) \in \BB$ and every 
$(\B, v) \in \seur{\B}{w}$ we have $(\B, v) \vDash \neg\psi$. Therefore $\BB \vDash \bo\neg\psi$ and thus $\BB \vDash \neg\di\psi$. So the formula $\di\psi$ 
separates the sets $\AA$ and $\BB$ and since $\ms(\di\psi) = \ms(\psi) + 1 \le m$ and $\cs(\di\psi) = \cs(\psi) \le k$, $\sep_{m,k}$ holds.

\item The case in which the first move of the winning strategy of S is a right successor move
is similar to the case of left successor move. It suffices to switch the classes $\AA$ and $\BB$,
and replace $\di$ with $\bo$. 

\end{enumerate}

Now assume $\sep_{m,k}$ holds, and $\phi$ is the formula separating $\AA$ and $\BB$. 
We obtain a winning strategy of S 
for the game $\EF{m,k}(\AA, \BB)$ using 
$\phi$ as follows:
\begin{enumerate}[(a)]

\item If $\phi$ is a literal, S wins the game with no moves.

\item Assume that $\phi = \psi \lor \vartheta$. Let $\AA_1 := \{(\A, w) \in \AA \mid (\A, w) \vDash \psi\}$ and $\AA_2 := \{(\A, w) \in \AA \mid (\A, w) \vDash
\vartheta\}$. Since $\AA \vDash \phi$ we have $\AA_1 \cup \AA_2 = \AA$. In addition, since $\BB \vDash \neg\phi$, we have $\BB \vDash \neg\psi$ and 
$\BB \vDash \neg\vartheta$. Thus $\psi$ separates the sets $\AA_1$ and $\BB$ and $\vartheta$ separates the sets $\AA_2$ and $\BB$. Since $\ms(\psi) 
+ \ms(\vartheta) = \ms(\phi) \le m$, there are $m_1, m_2 \in \Nset$ such that $m_1+m_2 = m$, $\ms(\psi) \le m_1$ and $\ms(\vartheta) \le m_2$. Similarly
since $\cs(\psi)+\cs(\vartheta)+1=\cs(\phi) \le k$, there are $k_1, k_2 \in \Nset$ such that $k_1+k_2+1 = k$, $\cs(\psi) \le k_1$ and $\cs(\vartheta) \le k_2$.
By induction hypothesis S has winning strategies for the games $\EF{m_1,k_1}(\AA_1, \BB)$ and $\EF{m_2,k_2}(\AA_2, \BB)$. Since $k \ge \cs(\phi)
\ge 1$, S can start the game $\EF{m,k}(\AA, \BB)$ with a left splitting move choosing the numbers $m_1$, $m_2$, $k_1$ and $k_2$ and the sets $\AA_1$
and $\AA_2$. Then S wins the game by following the winning strategy for 
whichever position D chooses.

\item The case $\phi = \psi \land \vartheta$ is similar to the case of disjunction. This time
S uses the sets $\BB_1 := \{(\B, w) \in \BB \mid (\B, w) \vDash \neg\psi\}$ and 
$\BB_2 := \{(\B, w) \in \BB \mid (\B, w) \vDash\neg\vartheta\}$ for choosing a right splitting move.

\item Assume that $\phi = \di\psi$. Since $\AA \vDash \phi$, for every $(\A, w) \in \AA$ there is $(\A, v_w) \in \seur{\A}{w}$ such that $(\A, v_w) \vDash \psi$. 
We define the function $f : \AA \to \kaikki\AA$ by $f(\A, w) = (\A, v_w)$. Clearly $\di_f\AA \vDash \psi$. On the other hand $\BB \vDash \neg \phi$ so $\BB \vDash 
\bo\neg\psi$ and thus for each $(\B, w) \in \BB$ and each $(\B, v) \in \seur{\B}{w}$ we have $(\B, v) \vDash \neg\psi$. Therefore $\kaikki\BB \vDash \neg\psi$
and the formula $\psi$ separates the sets $\di_f\AA$ and $\kaikki\BB$. Moreover,  $\ms(\psi) = \ms(\phi) - 1 \le m - 1$ and $\cs(\psi) = \cs(\phi) \le k$ so by
induction hypothesis S has a winning strategy for the game $\EF{m-1,k}(\di_f\AA, \kaikki\BB)$. Since $m \ge \ms(\phi) \ge 1$, S can start the game 
$\EF{m,k}(\AA, \BB)$ with a left successor move choosing the function $f$. Then S wins the game by following the winning strategy for the game 
$\EF{m-1,k}(\di_f\AA, \kaikki\BB)$.

\item The case $\phi = \bo\psi$ is proved in the same way as the case with $\di$.
Again it suffices to switch $\AA$ and $\BB$, and use right successor move instead of
left successor move. 
\end{enumerate}
\end{proof}

We prove next that $m$-bisimilarity implies that D has winning strategy in the 
formula size game with modal parameter $m$. 

\begin{theorem}\label{bisim}
Let $\AA$ and $\BB$ be sets of pointed models and let $m,k \in \Nset$. If there are $m$-bisimilar pointed models $(\A, w) \in \AA$ and $(\B, v) \in \BB$, 
then D has a winning strategy for the game $\EF{m,k}(\AA, \BB)$.
\end{theorem}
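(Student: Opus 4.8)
The plan is to show that $D$ can maintain, throughout the game, the invariant that there always remain witnesses $(\A,w)\in\AA$ and $(\B,v)\in\BB$ that are $j$-bisimilar, where $j$ is the current value of the modal parameter. Under this invariant, $S$ can never win at any reached position: if $j$-bisimilar pointed models $(\A,w)$ and $(\B,v)$ are present in $\AA$ and $\BB$ respectively, then in particular (taking $Z_0$ from the $j$-bisimulation, using $j\geq 0$) $w$ and $v$ are propositionally equivalent, so no literal can separate $\AA$ from $\BB$. Hence if $D$ keeps the invariant alive until a position with $m=k=0$ is reached, $D$ wins; and the invariant guarantees $S$ never wins earlier either. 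So the whole argument reduces to describing $D$'s response to each of $S$'s four move types and checking the invariant is preserved.

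The key steps, move by move. Suppose at the current position $(j,\ell,\AA,\BB)$ we have witnesses $(\A,w)\in\AA$, $(\B,v)\in\BB$ with $(\A,w)\bisim_j(\B,v)$, via relations $Z_j\subseteq\dots\subseteq Z_0$. For a \emph{left splitting move}, $S$ picks $\AA_1\cup\AA_2=\AA$ and a split $j_1+j_2=j$ of the modal parameter; the witness $(\A,w)$ lies in some $\AA_i$, and since $Z_j\subseteq Z_{j_i}$ (as $j_i\le j$), the same relations witness $(\A,w)\bisim_{j_i}(\B,v)$, so $D$ picks that side $(j_i,\ell_i,\AA_i,\BB)$ and the invariant survives. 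A \emph{right splitting move} is symmetric. For a \emph{left successor move}, $S$ picks $f:\AA\to\kaikki\AA$; the new position is $(j-1,\ell,\di_f\AA,\kaikki\BB)$. Here $f(\A,w)=(\A,u)$ for some $u$ with $wR^\A u$; if $j\ge 1$, clause (3) of $j$-bisimilarity gives $(\B,v')\in\seur{\B}{v}$ with $(\A,u)\bisim_{j-1}(\B,v')$, and $(\B,v')\in\kaikki\BB$, so the new witness pair $(\A,u),(\B,v')$ re-establishes the invariant at modal parameter $j-1$. (Note $S$ can only make this move when $\seur{\A}{w}$ is nonempty for all of $\AA$ — in particular $j\ge 1$ is not automatic, but if $j=0$ the invariant at parameter $0$ already forbids $S$ from winning, and we must still track that a successor move can be played; however clause (3) requires $j\ge 1$. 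I would handle this by noting that if $j=0$ we instead carry the invariant as ``$w$ and $v$ are propositionally equivalent'' which is all that is needed, and after a successor move at $j=0$ the modal parameter would go negative, which is disallowed, so in fact $j\ge 1$ whenever a successor move is legal in a reached position we care about.) The \emph{right successor move} is symmetric, using clause (4).

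I would organize this as a formal induction on $m+k$ (or on the number of moves remaining), exactly paralleling the structure of Theorem~\ref{peruslause}: in each of the four cases, describe $D$'s choice, invoke the appropriate clause of the definition of $n$-bisimulation, and apply the induction hypothesis to the resulting position, which has strictly smaller parameter sum and still contains an $m'$-bisimilar pair with $m'$ equal to its modal parameter. The base case $m+k=0$ is where the invariant pays off: propositional equivalence of $w$ and $v$ means no literal separates $\AA,\BB$, so $D$ wins by definition.

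The main obstacle is bookkeeping around the modal parameter in the successor moves: one must be careful that clause (3)/(4) of $n$-bisimilarity is only available when the current modal parameter is at least $1$, and conversely that $S$ is only permitted a successor move when it keeps the parameter nonnegative — so the two constraints are compatible, but the write-up needs to state precisely what invariant is carried when the modal parameter hits $0$ (namely just propositional equivalence, clause (2), which is exactly what blocks a literal separation). Everything else — the splitting moves and the shrinking of the $Z_i$ chain under $Z_j\subseteq Z_{j_i}$ — is routine.
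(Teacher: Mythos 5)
Your proposal is correct and follows essentially the same route as the paper: induction on $m+k$, with D preserving a pair of witnesses that are bisimilar to depth equal to the current modal parameter --- choosing the side containing the left witness in splitting moves and invoking the forth/back clauses in successor moves, with the base case settled by propositional equivalence blocking any separating literal. Your explicit discussion of the $j=0$ boundary for successor moves is a point the paper leaves implicit (such a move is simply unavailable since the modal parameter cannot go negative), but it does not change the argument.
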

\begin{proof}
The proof proceeds by induction on the number $m+k \in \Nset$. If $m+k = 0$ and $(\A, w) \in \AA$ and $(\B, v) \in \BB$ are $m$-bisimilar, then they are 
$0$-bisimilar and thus satisfy the same literals. Thus there is no literal $\phi \in \ML$ that separates the sets $\AA$ and $\BB$. Since S cannot make any
moves and S does not win the game in this position, D wins the game $\EF{0,0}(\AA, \BB)$.

Assume that $m+k > 0$ and $(\A, w) \in \AA$ and $(\B, v) \in \BB$ are $m$-bisimilar. As in the basic step, S does not win the game in this position. We
consider the cases of the first move of S in the game $\EF{m,k}(\AA, \BB)$.

If S starts with a left splitting move choosing the numbers $m_1$, $m_2$, $k_1$ and $k_2$ and the sets $\AA_1$ and $\AA_2$, then since $\AA_1 \cup \AA_2
= \AA$, D can choose the next position $(m_i, k_i, \AA_i, \BB)$, $i \in \{1, 2\}$ in such a way that $(\A, w) \in \AA_i$. Then we have
$m_i \le m$ and $m_i+k_i < m+k$ so by induction hypothesis D has a winning strategy for the game $\EF{m_i,k_i}(\AA_i, \BB)$. The case of a right splitting
move is similar.

If S starts with a left successor move choosing a function $f : \AA \to \kaikki\AA$, then since $(\A, w)$ and $(\B, v)$ are $m$-bisimilar, there is a pointed
model $(\B, v') \in \seur{\B}{v}$ that is $m-1$-bisimilar with the pointed model $f(\A, w)$. Since $m-1+k < m+k$, by induction hypothesis D has a winning
strategy in $\EF{m-1,k}(\di_f\AA, \kaikki\BB)$. The case of a right successor move is similar.
\end{proof}

\section{Succinctness of FO over ML}\label{nelonen}

In this section, we illustrate the use of the formula size game $\EF{m,k}$ by proving
a nonelementary succinctness gap between bisimulation invariant first-order logic
and modal logic.

\subsection{A property of pointed frames}\label{sec-property}

For the remainder of this paper we consider only the case where the set $\Phi$ of propositional symbols is empty. This makes all points in Kripke-models propositionally
equivalent so we call pointed models in this section pointed frames. 
The only formulas available for the win condition of S in the game $\EF{m,k}$ are
$\bot$ and $\top$. Thus S only wins the game from the position $(m,k,\AA, \BB)$ if either $\AA = \emptyset$ and $\BB \neq \emptyset$ or 
$\AA \neq \emptyset$ and $\BB = \emptyset$.

We will use the following two classes in our application of the formula size game $\EF{m,k}$:
\begin{itemize}
\item $\AA_n$ is the class of all pointed frames $(\A, w)$ such that for all
 $(\A, u), (\A, v) \in \seur{\A}{w}$, the frames $(\A, u)$ and $(\A, v)$ are $n$-bisimilar.
\item $\BB_n$ is the complement of $\AA_n$.
\end{itemize}

\begin{lemma}\label{FO}
For each $n \in \Nset$ there is a formula $\phi_n(x) \in \FO$ that separates the classes $\AA_n$ and $\BB_n$ such that the size of $\phi_n(x)$ is exponential with 
respect to $n$, i.e., $s(\phi_n) = \ordo(2^n)$.
\end{lemma}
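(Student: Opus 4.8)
The plan is to build $\phi_n(x)$ by expressing, in first-order logic, the statement ``all successors of $x$ are $n$-bisimilar to each other,'' and to do this we need a first-order formula that says two points $y$ and $z$ are $n$-bisimilar. Since $\Phi = \emptyset$, propositional equivalence is trivial, so $n$-bisimilarity reduces to the pure $n$-step back-and-forth condition on the accessibility relation. The key observation is that $n$-bisimilarity between $y$ and $z$ is exactly equivalence with respect to $\ML$-formulas of modal depth at most $n$, and with no propositional symbols there are, up to logical equivalence, only finitely many such formulas; in fact one can write a single first-order formula $\beta_n(y,z)$ capturing $(\M,y) \bisim_n (\M,z)$ by recursion on $n$.

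First I would define $\beta_n(y,z)$ recursively: set $\beta_0(y,z) := \top$ (all points are propositionally equivalent), and
\[
\beta_{n+1}(y,z) := \forall y'\bigl(R y y' \to \exists z'(R z z' \land \beta_n(y',z'))\bigr) \land \forall z'\bigl(R z z' \to \exists y'(R y y' \land \beta_n(y',z'))\bigr).
\]
A straightforward induction on $n$, using the definition of $n$-bisimulation with the relations $Z_i$ taken to be the $i$-bisimilarity relations themselves, shows that $\M \vDash \beta_n[y,z]$ if and only if $(\M,y) \bisim_n (\M,z)$. Then I would set
\[
\phi_n(x) := \forall y\,\forall z\bigl((Rxy \land Rxz) \to \beta_n(y,z)\bigr).
\]
By the definition of $\AA_n$, we have $(\M,w) \vDash \phi_n[x]$ exactly when all successors of $w$ are pairwise $n$-bisimilar, i.e. exactly when $(\M,w) \in \AA_n$; and $(\M,w) \in \BB_n$ means $(\M,w) \nvDash \phi_n[x]$, i.e. $\M \vDash \neg\phi_n[w/x]$. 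Hence $\phi_n$ separates $\AA_n$ and $\BB_n$.

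It remains to bound the size. By the size definition for $\FO$, each quantifier adds $1$ and each binary connective adds $1$, while negations and atoms are free. Let $s_n = \s(\beta_n)$. From the recursion, $\beta_{n+1}$ uses a bounded number of quantifiers and connectives (a constant $c$, counting the four quantifiers, the two implications, the conjunction, and the nested $\land$ inside) plus two disjoint copies of $\beta_n$, so $s_{n+1} = 2 s_n + c$ with $s_0 = 0$; this solves to $s_n = c(2^n - 1) = \ordo(2^n)$. Finally $\s(\phi_n) = s_n + c'$ for a small constant $c'$ accounting for the two outer quantifiers and the implication-connective, so $\s(\phi_n) = \ordo(2^n)$, as required.

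\textbf{Main obstacle.} The conceptual content is all in getting the recursive formula $\beta_n$ right and proving by induction that it genuinely defines $n$-bisimilarity — one has to be careful that the single formula $\beta_n$, rather than an infinite family indexed by modal-depth-$n$ $\ML$-formulas, suffices, which works here precisely because $\Phi=\emptyset$ collapses the propositional base case. The size computation itself is a routine geometric-series recurrence; the only mild subtlety is making sure the two recursive occurrences of $\beta_n$ are counted as two separate subformulas (string size, not DAG size), which is exactly what forces the $2^n$ rather than a linear bound and matches the stated $\ordo(2^n)$.
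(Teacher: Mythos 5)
Your proposal is correct and follows essentially the same route as the paper: a recursive first-order formula expressing $n$-bisimilarity of two points (the paper's $\psi_n$ is your $\beta_n$, differing only in that the paper starts the recursion at $n=1$ with the explicit successor-existence equivalence rather than at $n=0$ with $\top$), wrapped in two universal quantifiers over successors of $x$, with the same $s_{n+1}=2s_n+c$ recurrence giving the $\ordo(2^n)$ bound. No gaps worth noting.
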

\begin{proof}
We first define formulas $\psi_n(x,y) \in \FO$ such that 
$(\M, u)\bisim_n (\M, v)$ if and only if $\M \vDash \psi_n[u/x, v/y]$.
The formulas $\psi_n(x,y)$ are defined recursively as follows:
\begin{align*}
\psi_1(x,y) := &\exists s R(x,s) \leftrightarrow \exists t R(y,t) \\
\psi_{n+1}(x,y) := &\forall s(R(x,s) \rightarrow \exists t(R(y,t) \land \psi_n(s,t)) \\
			& \land \forall t(R(y,t) \rightarrow \exists s(R(x,s) \land \psi_n(s,t)).
\end{align*}
Clearly these formulas express $n$-bisimilarity as intended. When we interpret the equivalences and implications as shorthand in the standard way,
we get the sizes $\s(\psi_1) = 11$ and $\s(\psi_{n+1}) = 2 \cdot \s(\psi_n) + 13$. Thus $\s(\psi_n) = 3 \cdot 2^{n+2} - 13$.

Now we can define the formulas $\phi_n$:
\[
\phi_n(x) := \forall y \forall z (R(x,y) \land R(x,z) \rightarrow \psi_n(y,z)).
\]
Clearly for every $(\A, w) \in \AA_n$ we have $\A \vDash \phi_n[w/x]$ and for every $(\B, v) \in \BB_n$ we have $\B \vDash \neg \phi_n[w/x]$ so the formula
$\phi_n$ separates the classes $\AA_n$ and $\BB_n$. Furthermore, $\s(\phi_n) = \s(\psi_n) + 6 = 3 \cdot 2^{n+2} - 7$ so the size of $\phi_n$ is exponential with
respect to $n$.
\end{proof}

\begin{lemma}
For each $n \in \Nset$, the formula $\phi_n$ is $n+1$-bisimulation invariant.
\end{lemma}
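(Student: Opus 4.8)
The plan is to argue directly from the definitions of $n$-bisimilarity and of the class $\AA_n$. By the construction of $\phi_n$ and the fact that $\psi_n$ defines $n$-bisimilarity, a pointed frame $(\M,w)$ satisfies $\phi_n$ precisely when all pairs $(\M,u),(\M,v)\in\seur{\M}{w}$ are $n$-bisimilar, i.e.\ precisely when $(\M,w)\in\AA_n$. Hence $\phi_n$ is $n+1$-bisimulation invariant if and only if $\AA_n$ is closed under $\bisim_{n+1}$, and since $\bisim_{n+1}$ is symmetric it suffices to prove a single implication: if $(\M,w)\bisim_{n+1}(\M',w')$ and $(\M,w)\in\AA_n$, then $(\M',w')\in\AA_n$.

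To do this, I fix a witnessing chain $Z_{n+1}\subseteq\cdots\subseteq Z_0$ for $(\M,w)\bisim_{n+1}(\M',w')$, and let $(\M',u'),(\M',v')\in\seur{\M'}{w'}$ be arbitrary. Applying the back condition of the definition at level $n+1$ (clause (4) with $i=n$, using $(\M,w)Z_{n+1}(\M',w')$) twice, I obtain successors $(\M,u),(\M,v)\in\seur{\M}{w}$ with $(\M,u)Z_n(\M',u')$ and $(\M,v)Z_n(\M',v')$. The key observation is that the truncated chain $Z_n\subseteq\cdots\subseteq Z_0$ is itself a witness of $n$-bisimilarity for any pair it relates at level $n$, because conditions (1)--(4) for parameter $n$ form a subset of the conditions already available for parameter $n+1$. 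Thus $(\M,u)\bisim_n(\M',u')$ and $(\M,v)\bisim_n(\M',v')$. Since $(\M,w)\in\AA_n$ we also have $(\M,u)\bisim_n(\M,v)$, and because $\bisim_n$ is an equivalence relation --- which follows at once from the recalled fact that $\bisim_n$ coincides with equivalence for $\ML$-formulas of modal depth at most $n$ --- we conclude $(\M',u')\bisim_n(\M',v')$. As $(\M',u')$ and $(\M',v')$ were arbitrary successors of $w'$, this gives $(\M',w')\in\AA_n$.

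I do not expect a genuine difficulty here; the only point requiring care is the bookkeeping with indices in the bisimulation-game definition, so that the level-$(n+1)$ back step delivers exactly $Z_n$-related successors while the remaining chain $Z_n\subseteq\cdots\subseteq Z_0$ is recognised as an $n$-bisimulation. A more economical but less self-contained alternative would be to note that over $\Phi=\emptyset$ there are only finitely many $n$-bisimilarity types, each described by a characteristic $\ML$-formula $\chi_\tau$ of modal depth at most $n$, so that $\phi_n$ is equivalent to the finite disjunction $\bigvee_\tau\bo\chi_\tau$, a formula of modal depth at most $n+1$; invariance then follows immediately from the modal-depth characterization of $n+1$-bisimilarity.
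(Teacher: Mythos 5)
Your argument is correct and is essentially the paper's own proof: take arbitrary successors $(\M',u'),(\M',v')$ of $w'$, pull them back along the $(n{+}1)$-bisimulation to successors $(\M,u),(\M,v)$ of $w$, and chain $(\M',u')\bisim_n(\M,u)\bisim_n(\M,v)\bisim_n(\M',v')$ using that $\bisim_n$ is an equivalence relation. You spell out the bookkeeping with the $Z_i$-chain and the appeal to symmetry of $\bisim_{n+1}$ more explicitly than the paper does, but the route is the same.
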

\begin{proof}
Let $(\A, w)$ and $(\B, v)$ be $n+1$-bisimilar pointed models. Assume that $\A \vDash \phi_n[w/x]$. If $(\B, v_1), (\B, v_2) \in \seur{\B}{v}$, by $n+1$-bisimilarity there are 
$(\A, w_1), (\A, w_2) \in \seur{\A}{w}$ such that $(\A, w_1) \bisim_n (\B, v_1)$ and $(\A, w_2) \bisim_n (\B, v_2)$. Since $\A \vDash \phi_n[w/x]$, we have 
$(\B, v_1) \bisim_n (\A, w_1) \bisim_n (\A, w_2) \bisim_n (\B, v_2)$ so $\B \vDash \psi_n[v_1/x, v_2/y]$. Thus, we see that $\B \vDash \phi_n[v/x]$.
\end{proof}

It follows now from van Benthem's characterization theorem that each $\phi_n$ is equivalent
to some $\ML$-formula. Thus, we get the following corollary.

\begin{corollary}\label{Rosen}
For each $n \in \Nset$, there is a formula $\vartheta_n \in \ML$ that separates the classes $\AA_n$ and $\BB_n$.
\end{corollary}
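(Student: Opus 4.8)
The plan is to simply chain together the two preceding lemmas with van Benthem's characterization theorem, observing that the notion of ``separating $\AA_n$ and $\BB_n$'' depends only on the model class $\MM(\cdot)$ of the formula and is therefore preserved under logical equivalence. So the whole argument is a short bookkeeping exercise rather than a construction.

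First I would recall that, by Lemma~\ref{FO}, the first-order formula $\phi_n(x)$ separates $\AA_n$ and $\BB_n$; unwinding the definition of separation for $\FO$, this says exactly that $\AA_n \subseteq \MM(\phi_n)$ and $\BB_n \subseteq \overline{\MM(\phi_n)}$. Next I would invoke the immediately preceding lemma, which establishes that $\phi_n$ is $(n+1)$-bisimulation invariant; as noted in the Preliminaries, $(n+1)$-bisimulation invariance implies (plain) bisimulation invariance of $\MM(\phi_n)$. Then van Benthem's Characterization Theorem applies directly and yields a formula $\vartheta_n \in \ML$ that is equivalent to $\phi_n$, i.e.\ $\MM(\vartheta_n) = \MM(\phi_n)$.

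Finally I would transfer the separation: since $\MM(\vartheta_n) = \MM(\phi_n)$, we get $\AA_n \subseteq \MM(\vartheta_n)$ and $\BB_n \subseteq \overline{\MM(\vartheta_n)}$, which by the definition of separation for $\ML$ means precisely that $\vartheta_n$ separates $\AA_n$ and $\BB_n$. Equivalently, $\AA_n \vDash \phi_n$ gives $\AA_n \vDash \vartheta_n$, and $\BB_n \vDash \neg\phi_n$ gives $\BB_n \vDash \neg\vartheta_n$. This completes the argument.

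I do not anticipate a genuine obstacle here: the only point that needs a moment's care is to confirm that the $\FO$- and $\ML$-notions of ``separates'' are stated so that both reduce to the same containment conditions on $\MM(\cdot)$ and $\overline{\MM(\cdot)}$, so that equivalence of $\phi_n$ and $\vartheta_n$ is enough to carry separation across. Everything else is a direct citation of results already proved in the excerpt.
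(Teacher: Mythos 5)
Your proposal is correct and follows exactly the paper's own route: Lemma~\ref{FO} gives the separating $\FO$-formula, the preceding lemma gives its $(n+1)$-bisimulation invariance (hence bisimulation invariance), van Benthem's theorem yields an equivalent $\ML$-formula, and separation transfers under equivalence since it depends only on $\MM(\cdot)$. No differences worth noting.
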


\subsection{Set theoretic construction of pointed frames}

We have shown that the classes $\AA_n$ and $\BB_n$ can be separated both in $\ML$ and in $\FO$. Furthermore the size of the FO-formula is exponential with respect to
$n$. It only remains to ask: what is the size of the smallest $\ML$-formula that separates the classes $\AA_n$ and $\BB_n$? To answer this we will need suitable subsets of $\AA_n$ and $\BB_n$ to play the formula size game
on.

\begin{definition}
Let $n \in \Nset$. \emph{The finite levels of the cumulative hierarchy} are defined recursively as follows:
\begin{align*}
V_0 &= \emptyset \\
V_{n+1} &= \powerset(V_n)
\end{align*}
\end{definition}

For every $n \in \Nset$, $V_n$ is a \emph{transitive set}, i.e., for every $a \in V_n$ and every $b \in a$ it holds that $b \in V_n$. Thus it is reasonable to define a frame $\F_n = (V_n, R_n)$, where for all $a, b \in V_n$ it holds that $(a, b) \in R_n \Leftrightarrow b \in a$.

For every set $a \in V_n$ we define a pointed frame $(\M_a, a)$, where $\M_a$ is the subframe of $\F_n$ generated by the point $a$. 

\begin{lemma}\label{notbisim}
Let $n \in \Nset$ and $a, b \in V_{n+1}$. If $a \neq b$, then $(\M_a, a)\not\bisim_n(\M_b, b)$. 
\end{lemma}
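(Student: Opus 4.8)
The plan is to argue by induction on $n$. The base case $n=0$ is vacuous: $V_1 = \powerset(V_0) = \{\emptyset\}$ is a singleton, so there are no distinct $a,b \in V_1$. For the inductive step, assume the statement for $n$ and let $a,b \in V_{n+2}$ with $a \neq b$. Then $a,b \subseteq V_{n+1}$, so some element distinguishes them; by symmetry (both $\bisim_{n+1}$ and the roles of $\M_a,\M_b$ are interchangeable) we may assume there is $c \in a \setminus b$. In the frame $\F_{n+2}$ — and hence in the generated subframe $\M_a$ — the successors of the point $a$ are exactly the members of $a$, so $c$ is a successor of $a$ in $\M_a$. I would also record the routine observation that, because the accessibility relation in every $\F_m$ is just inverse membership (independent of $m$) and each $V_m$ is transitive, the subframe of $\M_a$ generated by a successor $c$ of $a$ is exactly $\M_c$, and likewise the subframe of $\M_b$ generated by a successor $d$ of $b$ is exactly $\M_d$.

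Now suppose, toward a contradiction, that $(\M_a, a) \bisim_{n+1} (\M_b, b)$, witnessed by relations $Z_{n+1} \subseteq \dots \subseteq Z_0$. Applying the forth condition at index $n$ to the pair $a\,Z_{n+1}\,b$ and the successor $c$ of $a$ yields a successor $d$ of $b$ — that is, an element $d \in b$ — with $c\,Z_n\,d$. Restricting the chain $Z_n \subseteq \dots \subseteq Z_0$ to the point sets of $\M_c$ and $\M_d$ gives a witness that $(\M_c, c) \bisim_n (\M_d, d)$: the top pair is related since $c\,Z_n\,d$, propositional equivalence at the bottom is inherited, and the back-and-forth clauses at indices below $n$ survive because generated subframes are closed under taking successors (a successor in $\M_a$ of a point of $\M_c$ is already a point of $\M_c$, and similarly for $\M_d$). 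Since $c \in a \subseteq V_{n+1}$ and $d \in b \subseteq V_{n+1}$, the induction hypothesis applies: as $(\M_c,c) \bisim_n (\M_d,d)$, it forces $c = d$. But then $c = d \in b$, contradicting $c \in a\setminus b$. Hence $(\M_a, a) \not\bisim_{n+1} (\M_b, b)$, which closes the induction.

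The only point needing genuine care is this localization step: verifying that the restriction of an $n$-bisimulation between $(\M_a,a)$ and $(\M_b,b)$ to the parts reachable from $c$ and from $d$ is again an $n$-bisimulation, and that the underlying restricted frames are literally $\M_c$ and $\M_d$ rather than merely isomorphic copies of them. This is the standard fact that $n$-bisimilarity passes to generated subframes, and it is completely routine in the present setting; everything else — the extraction of $c \in a \setminus b$ and the index arithmetic in the definition of $n$-bisimilarity — is immediate.
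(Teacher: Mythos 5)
Your proof is correct and follows essentially the same route as the paper's: induction on $n$, extracting an element $c \in a \setminus b$, using the forth condition to obtain a matching $d \in b$ with $(\M_c,c)\bisim_n(\M_d,d)$, and invoking the induction hypothesis to force $c=d$, a contradiction. The only difference is cosmetic — your indexing runs one step ahead of the paper's, and you spell out the (correct, and indeed routine) localization step that the restriction of the bisimulation witnesses $(\M_c,c)\bisim_n(\M_d,d)$ on the literal generated subframes, which the paper leaves implicit.
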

\begin{proof}
We prove the claim by induction on $n$. The basic step $n = 0$ is trivial since $V_1$ only has one element. For the induction step, assume that 
$a, b \in V_{n+1}$ and $a \neq b$. Assume further for contradiction that 
$(\M_a, a)\bisim_n (\M_b, b)$. Since $a \neq b$, by symmetry we can 
assume that there is $x \in a$ such that $x \notin b$. By $n$-bisimilarity there is $y \in b$ such that $(\M_x, x)$ and $(\M_y, y)$ are $n-1$-bisimilar. Since 
$x \in a \in V_{n+1}$ and $y \in b \in V_{n+1}$, we have $x ,y \in V_n$. By induction hypothesis we obtain $x = y$. This is a contradiction, since $x \notin b$ 
and $y \in b$.
\end{proof}

Let $\AA$ be a set of pointed frames and $w \notin \dom(\A)$ for all $(\A, v) \in \AA$. We use the notation $\liitos\AA := (\M, w)$, where 
\begin{align*}
&\dom(\M) = \{w\} \cup \bigcup\{\dom(\A) \mid (\A, v) \in \AA\}, \text{ and }\\
 &R^\M = \{(w,v) \mid (\A, v) \in \AA \} \cup \bigcup\{R^\A \mid (\A, v) \in \AA\}.
\end{align*}

For $n \in \Nset$ we further use the notation
\begin{align*}
\VV_n &:= \{ \liitos \{(\M_a, a)\} \mid a \in V_{n+1} \} \\
\EE_n &:= \{ \liitos \{ (\M_a, a), (\M_b, b) \} \mid a, b \in V_{n+1}, a \neq b \}
\end{align*}

It is well known that the cardinality of $V_n$ is the exponential tower of $n-1$, 
Thus, the cardinality of $\VV_n$ is $\tower(n)$.

\begin{lemma}\label{tower}
If $n \in \Nset$, we have $|\VV_n| = |V_{n+1}| = \tower(n)$. \qed
\end{lemma}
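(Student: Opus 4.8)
The claim $|\VV_n| = |V_{n+1}| = \tower(n)$ splits into two parts. First I would establish the cardinality of the cumulative hierarchy levels $V_{n+1}$, and then observe that $\VV_n$ inherits this cardinality from its indexing.

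The plan is to prove $|V_{n+1}| = \tower(n)$ by induction on $n$. For the base case $n=0$: $V_1 = \powerset(V_0) = \powerset(\emptyset) = \{\emptyset\}$, so $|V_1| = 1 = \tower(0)$. For the inductive step, assume $|V_{n+1}| = \tower(n)$. Then $V_{n+2} = \powerset(V_{n+1})$, so $|V_{n+2}| = 2^{|V_{n+1}|} = 2^{\tower(n)} = \tower(n+1)$ by the recursive definition of $\tower$. This handles the first equality and shows $|V_{n+1}| = \tower(n)$.

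It remains to show $|\VV_n| = |V_{n+1}|$. By definition, $\VV_n = \{ \liitos\{(\M_a,a)\} \mid a \in V_{n+1}\}$, so the map $a \mapsto \liitos\{(\M_a,a)\}$ is a surjection from $V_{n+1}$ onto $\VV_n$. To see it is injective, suppose $a, b \in V_{n+1}$ with $a \neq b$. By Lemma~\ref{notbisim}, $(\M_a,a) \not\bisim_n (\M_b,b)$, and in particular $(\M_a,a) \not\bisim_0 (\M_b,b)$ would fail to be the right tool; instead one notes that $n$-bisimilarity is an equivalence-like relation and non-$n$-bisimilar pointed frames are certainly non-isomorphic, hence $\liitos\{(\M_a,a)\}$ and $\liitos\{(\M_b,b)\}$ are distinct pointed frames (their successor sets of the root are non-isomorphic). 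Therefore the map is a bijection and $|\VV_n| = |V_{n+1}| = \tower(n)$.

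The only subtle point is the injectivity argument: strictly speaking one should be a little careful about what ``distinct'' means for the elements of $\VV_n$ — if these are taken literally as set-theoretic objects built from the underlying sets $V_n$, then $a \neq b$ already forces $\liitos\{(\M_a,a)\} \neq \liitos\{(\M_b,b)\}$ since $a$ is recoverable as the unique root, making Lemma~\ref{notbisim} strictly unnecessary for the counting. If instead one counts up to isomorphism, Lemma~\ref{notbisim} does the job. Either way the step is routine, which is presumably why the lemma is marked with \qed and given no proof in the text. I expect no real obstacle here; the whole argument is a short double induction-plus-bijection count.
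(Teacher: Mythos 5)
Your proof is correct and follows exactly the argument the paper leaves implicit (the lemma is stated with no proof beyond the remark that $|V_n|=\tower(n-1)$ is well known): induction gives $|V_{n+1}|=\tower(n)$, and the map $a\mapsto\liitos\{(\M_a,a)\}$ is a bijection onto $\VV_n$ since distinct $a\neq b$ yield distinct pointed frames. Your closing observation is also right that Lemma~\ref{notbisim} is not needed for this count, since $a$ is recoverable as the unique $R$-successor of the root.
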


\subsection{Graph colorings and winning strategies in $\EF{m,k}$}

Our aim is to prove that any $\ML$-formula $\theta_n$ separating the
sets $\VV_n$ and $\EE_n$ is of size at least $\tower(n-1)$. To do this,
we make use of a surprising connection between the chromatic numbers
of certain graphs related to pairs of the form $(\VV,\EE)$, and existence
of a winning strategies for D in the game $\EF{m,k}(\VV,\EE)$.

Let $n \in \Nset$, $\emptyset \neq \VV \subseteq \VV_n$ and $\EE \subseteq \EE_n$. Then $\G (\VV, \EE)$ denotes the graph $(V, E)$, where
\begin{align*}
V &= \{ (\M, w) \mid \liitos \{(\M, w) \} \in \VV \}, \text{ and }\\
 E &= \{ ((\M, w), (\M', w')) \in V \times V \mid \liitos \{(\M, w), (\M', w')\} \in \EE\}.
\end{align*}

\begin{definition}
Let $\G = (V, E)$ be a graph and let $C$ be a set. A function $\chi : V \to C$ is a \emph{coloring} of the graph $\G$ if for all $u, v \in V$ it holds that 
if $(u, v) \in E$, then $\chi(u) \neq \chi(v)$. If the set $C$ has $k$ elements, then $\chi$ is called a \emph{$k$-coloring} of $\G$.

The \emph{chromatic number} of $\G$, denoted by $\chi(\G)$, is the smallest number $k \in \Nset$ for which there is a $k$-coloring of $\G$.
\end{definition}

\begin{lemma}\label{väritys}
Let $\G = (V, E)$ be a graph. 
\begin{enumerate}
\item Let $V_1, V_2 \subseteq V$ be nonempty such that $V_1 \cup V_2 = V$ and let $\G_1 = (V_1, E \upharpoonright V_1)$ and
$\G_2 = (V_2, E \upharpoonright V_2)$. Then we have  $\chi (\G) \le \chi (\G_1) + \chi (\G_2)$.
\item Let $E_1, E_2 \subseteq E$ such that $E_1 \cup E_2 = E$ and let  $\G_1 = (V, E_1)$ and \mbox{$\G_2 = (V, E_2)$}. Then $\chi (\G) \le \chi (\G_1)\chi (\G_2)$.
\end{enumerate}
\end{lemma}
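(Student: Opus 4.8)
The plan is to prove both parts directly by combining colorings of the pieces into a coloring of the whole graph, and then to invoke minimality of the chromatic number.

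\medskip
\noindent\emph{Part (1).} Suppose $\chi(\G_1) = k_1$ and $\chi(\G_2) = k_2$, witnessed by colorings $\chi_1 : V_1 \to C_1$ and $\chi_2 : V_2 \to C_2$ with $C_1 \cap C_2 = \emptyset$ and $|C_1| = k_1$, $|C_2| = k_2$. The idea is to define $\chi : V \to C_1 \cup C_2$ by giving priority to $V_1$: set $\chi(v) = \chi_1(v)$ if $v \in V_1$, and $\chi(v) = \chi_2(v)$ if $v \in V_2 \setminus V_1$. I would then check that $\chi$ is a coloring of $\G$. Take an edge $(u,v) \in E$. If both $u, v \in V_1$, then $(u,v) \in E \upharpoonright V_1$, so $\chi_1(u) \neq \chi_1(v)$, hence $\chi(u) \neq \chi(v)$. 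If both lie in $V_2 \setminus V_1$, the same argument works via $\chi_2$. If one of them, say $u$, is in $V_1$ and the other, $v$, is in $V_2 \setminus V_1$, then $\chi(u) \in C_1$ while $\chi(v) \in C_2$, and since $C_1 \cap C_2 = \emptyset$ we get $\chi(u) \neq \chi(v)$. Thus $\chi$ is a $(k_1 + k_2)$-coloring of $\G$, and by definition of chromatic number $\chi(\G) \le k_1 + k_2 = \chi(\G_1) + \chi(\G_2)$.

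\medskip
\noindent\emph{Part (2).} Suppose $\chi(\G_1) = k_1$ and $\chi(\G_2) = k_2$, witnessed by colorings $\chi_1 : V \to C_1$ and $\chi_2 : V \to C_2$ with $|C_1| = k_1$, $|C_2| = k_2$. Define $\chi : V \to C_1 \times C_2$ by $\chi(v) = (\chi_1(v), \chi_2(v))$. To see this is a coloring of $\G$, take an edge $(u,v) \in E = E_1 \cup E_2$. If $(u,v) \in E_1$, then $\chi_1(u) \neq \chi_1(v)$, so the first coordinates of $\chi(u)$ and $\chi(v)$ differ, hence $\chi(u) \neq \chi(v)$; symmetrically if $(u,v) \in E_2$. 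So $\chi$ is a coloring using at most $|C_1 \times C_2| = k_1 k_2$ colors, giving $\chi(\G) \le \chi(\G_1)\chi(\G_2)$.

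\medskip
\noindent There is essentially no obstacle here; the only mild point of care is the bookkeeping about the color sets — taking $C_1$ and $C_2$ disjoint in Part (1), and ensuring that a vertex in $V_1 \cap V_2$ is consistently handled (the priority convention resolves this cleanly). The rest is a routine verification on a single arbitrary edge, split into cases by which side(s) the endpoints lie on (Part 1) or which edge set the edge belongs to (Part 2).
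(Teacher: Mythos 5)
Your proof is correct and follows essentially the same route as the paper: in Part (1) the paper likewise uses disjoint color ranges $\{1,\dots,k_1\}$ and $\{k_1+1,\dots,k_1+k_2\}$ and the coloring $\chi_1 \cup (\chi_2 \upharpoonright (V_2 \setminus V_1))$, which is exactly your priority convention, and in Part (2) it uses the same product coloring. Your write-up just spells out the edge case analysis that the paper leaves as ``straightforward.''
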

\begin{proof}
(i) Let $V_1$, $V_2$, $\G_1$ and $\G_2$ be as in the claim and let $k_1 = \chi(\G_1)$ and $k_2 = \chi(\G_2)$. Let $\chi_1 : V_1 \to \{1, \dots, k_1\}$ 
be a $k_1$-coloring of the graph $\G_1$ and let $\chi_2 : V_2 \to \{k_1+1, \dots, k_1+k_2 \}$ be a $k_2$-coloring of the graph $\G_2$. Then it is straightforward to show that
$\chi = \chi_1 \cup (\chi_2 \upharpoonright (V_2 \setminus V_1))$ is a $k_1+k_2$-coloring of the graph $\G$, whence $\chi (\G) \le k_1+k_2 =\chi (\G_1) + \chi (\G_2)$.

(ii) Let $\chi_1 : V \to \{1, \dots, k_1\}$ and $\chi_2 : V \to \{1, \dots, k_2\}$ be colorings of the graphs $\G_1$ and $\G_2$, respectively. Then it is easy to verify that the map 
$\chi : V \to \{1, \dots, k_1\} \times \{1, \dots, k_2\}$ defined by $\chi (v) = (\chi_1(v), \chi_2(v))$ is a coloring of $\G$. Thus we obtain $\chi (\G) \le |\{1, \dots, k_1\} \times 
\{1, \dots, k_2\}| = \chi (\G_1) \chi (\G_2)$.
\end{proof}

\begin{lemma}\label{winstrat}
Assume $\emptyset \neq \VV \subseteq \VV_n$ and $\EE \subseteq \EE_n$ for some $n \in \Nset$ and let $m, k \in \Nset$. If $\chi (\G(\VV, \EE)) \ge 2$ and 
$k < \log ( \chi (\G(\VV, \EE)))$, then D has a winning strategy in the game $\EF{m,k}(\VV, \EE)$.
\end{lemma}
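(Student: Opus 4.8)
The plan is to prove the contrapositive-style statement by induction on $m+k$, showing that as long as the chromatic number of the associated graph is large enough relative to $k$, D can survive every move S makes. The key invariant to maintain along a play is: at the current position $(m',k',\VV',\EE')$ we still have $\VV' \subseteq \VV_n$, $\EE' \subseteq \EE_n$, and $k' < \log(\chi(\G(\VV',\EE')))$; in particular $\chi(\G(\VV',\EE')) \ge 2^{k'+1} \ge 2$, so by Lemma~\ref{notbisim} combined with the definition of $\G$, the graph has at least one edge, meaning there are distinct $a,b$ with $\liitos\{(\M_a,a),(\M_b,b)\} \in \EE'$. Since with $\Phi=\emptyset$ the only separating literals are $\top,\bot$, S wins a position only when one side is empty and the other is not; but the existence of an edge forces both $\VV'$ and $\EE'$ to be nonempty, so S never wins, and when $m'=k'=0$ is reached D wins outright. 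This handles the base case $m+k=0$ and the ``S cannot win'' part of every inductive step.

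For the inductive step I would case on S's first move. The splitting moves are exactly where Lemma~\ref{väritys} is designed to be used. If S makes a left splitting move with $\VV = \VV_1 \cup \VV_2$ and $k = k_1+k_2+1$, then by part (i) of Lemma~\ref{väritys} we get $\chi(\G(\VV,\EE)) \le \chi(\G(\VV_1,\EE')) + \chi(\G(\VV_2,\EE'))$ for the restricted graphs, so $\max_i \chi(\G(\VV_i,\EE')) \ge \tfrac12 \chi(\G(\VV,\EE)) > 2^{k} / 2 = 2^{k_1+k_2}$, hence for the $i$ achieving the max, $\log\chi(\G(\VV_i,\EE')) > k_1+k_2 \ge k_i$; D chooses that $i$ and the invariant is preserved with parameters $(m_i,k_i)$, and $m_i+k_i < m+k$, so the induction hypothesis applies. (One must double-check the edge cases $\VV_i = \emptyset$: if one part is empty, the other part equals $\VV$ and its graph is $\G(\VV,\EE)$ itself, which still satisfies the bound; if a part is nonempty with no edges its chromatic number is $1$, and the arithmetic still goes through.) The right splitting move uses part (ii): $\EE = \EE_1 \cup \EE_2$ gives $\chi(\G(\VV,\EE)) \le \chi(\G(\VV,\EE_1))\chi(\G(\VV,\EE_2))$, so $\max_i \chi(\G(\VV,\EE_i)) \ge \sqrt{\chi(\G(\VV,\EE))} > 2^{(k+1)/2} \ge 2^{(k_1+k_2+2)/2} = 2^{(k_1+k_2)/2+1} \ge 2^{\max(k_1,k_2)+1}$ — here I would be slightly careful with the arithmetic, but the point is that taking binary logarithms, $\log\chi(\G(\VV,\EE_i)) > \tfrac12(k+1) \ge k_i+1 > k_i$ for the maximizing $i$, so D picks that $\EE_i$ and again the invariant survives with $m_i \le m$, $k_i < k$.

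For the successor moves, neither parameter $k$ changes, only $m$ drops by one, so I would argue that the graph does not get worse. Consider a left successor move: S picks $f : \VV \to \kaikki\VV$ and the game moves to $(m-1, k, \di_f\VV, \kaikki\EE)$. By construction of $\liitos$, each $(\M,w) \in \VV$ has $\seur{\M}{w} = \{(\M_a,a)\}$ a singleton (it came from $\liitos\{(\M_a,a)\}$), so $f$ is forced and $\di_f\VV$ is essentially a relabelled copy of $\{(\M_a,a) : a\}$; similarly $\kaikki\EE$ collects the two-element successor sets from the $\EE$-members. One then checks the resulting position is again of the form $(\VV'',\EE'')$ with $\emptyset \ne \VV'' \subseteq \VV_n$, $\EE'' \subseteq \EE_n$, and — this is the crux — the new graph $\G(\VV'', \EE'')$ has chromatic number at least that of $\G(\VV,\EE)$, because the successor structure preserves the relevant $n$-bisimilarity distinctions (Lemma~\ref{notbisim} again ensures distinct points stay distinct). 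Actually the cleanest route may be that $\G(\VV'',\EE'')$ is isomorphic to, or contains as a subgraph, $\G(\VV,\EE)$, so $\chi$ does not decrease and since $k$ is unchanged the invariant $k < \log\chi$ is immediately preserved; then $m-1+k < m+k$ and the induction hypothesis finishes it. The right successor move is symmetric. The main obstacle I anticipate is precisely this last point: making rigorous the claim that the successor moves do not decrease the chromatic number of the associated graph — this requires unwinding the $\liitos$ construction and matching it against the definitions of $\VV_n$, $\EE_n$ and $\G$, and is where the set-theoretic rigidity from Lemma~\ref{notbisim} must be invoked carefully. Everything else is bookkeeping with the two inequalities of Lemma~\ref{väritys} and the fact that the game tree shrinks in $m+k$ at each move.
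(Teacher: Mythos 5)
Your treatment of the splitting moves is essentially the paper's argument: Lemma~\ref{väritys} plus the observation that if both successors satisfied $k_i \ge \log\chi(\G_i)$ you would get $k < k$, so D can pick the side where the invariant $k_i < \log\chi$ survives. (Your ``take the max and compare with $\sqrt{\chi}$'' arithmetic for the right splitting move does not quite work when $k_1 \neq k_2$ --- e.g.\ $k_1 = 0$, $k_2$ large --- but the contrapositive form of the same inequality does, so this is repairable bookkeeping.)

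The genuine gap is in the successor moves, which you yourself flag as ``the main obstacle'' and then resolve in the wrong direction. After a left successor move the position is $(m-1,k,\di_f\VV,\kaikki\EE)$, and $\kaikki\EE$ is \emph{not} a subset of $\EE_n$: its elements are the pointed frames $(\M_a,a)$, not frames of the form $\liitos\{(\M_a,a),(\M_b,b)\}$, so the graph $\G(\cdot,\cdot)$ is not even defined at the new position and there is no chromatic number to preserve. The invariant you propose to maintain simply cannot be stated there. The correct observation --- which makes the successor case trivial rather than hard --- is that after \emph{any} successor move the same pointed frame occurs on both sides: since $\chi(\G(\VV,\EE))\ge 2$ there is an edge, i.e.\ $\liitos\{(\M_a,a)\},\liitos\{(\M_b,b)\}\in\VV$ and $\liitos\{(\M_a,a),(\M_b,b)\}\in\EE$ with $a\neq b$; the unique successor of $\liitos\{(\M_a,a)\}$ is $(\M_a,a)$, and the successors of $\liitos\{(\M_a,a),(\M_b,b)\}$ are exactly $(\M_a,a)$ and $(\M_b,b)$, so whichever successor move S makes, some $(\M_a,a)$ lands in both the left and the right set. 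Identical pointed frames are $m$-bisimilar for every $m$, so Theorem~\ref{bisim} hands D a winning strategy on the spot. This is why the paper can induct on $k$ alone and never needs the ``rigidity'' of Lemma~\ref{notbisim} inside this proof; your plan as written would stall at exactly the point you identified.
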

\begin{proof}
Let $n,m,k \in \Nset$ and assume that $\emptyset \neq \VV \subseteq \VV_n$, $\EE \subseteq \EE_n$, $\chi (\G (\VV, \EE)) \ge 2$ and $k < \log (\chi (\G (\VV, \EE)))$.
We prove the claim by induction on $k$.

If $k = 0$, S can only make successor moves. Since $\chi (\G (\VV, \EE)) \ge 2$, there are $(\M, w), (\M', w') \in V$ such that $((\M, w), (\M', w')) \in E$.
Thus $\liitos \{(\M, w)\}$, $\liitos \{(\M', w')\} \in \VV$ and $\liitos \{(\M, w), (\M',w')\} \in \EE$. If S makes a left or right successor move, then in the 
resulting position $(m-1, 0, \VV', \EE')$ it holds that $(\M, w) \in \VV' \cap \EE'$ or $(\M', w') \in \VV' \cap \EE'$. Thus the same pointed model is present on both
sides of the game and by Theorem \ref{bisim}, D has a winning strategy for the game $\EF{m,k}(\VV', \EE')$.

Assume then that $k > 0$. If S starts the game with a successor move, then D wins as described above.

Assume that S begins the game with a left splitting move choosing the numbers $m_1,m_2,k_1, k_2 \in \Nset$ and the sets $\VV_1, \VV_2 \subseteq \VV$. Consider
the graphs $\G (\VV, \EE) = (V, E)$, $\G(\VV_1, \EE) = (V_1, E_1)$ and $\G(\VV_2, \EE) = (V_2, E_2)$. Since $\VV_1 \cup \VV_2 = \VV$, we have $V_1 \cup
V_2 = V$. In addition, by the definition of the graphs $\G(\VV, \EE)$, $\G(\VV_1, \EE)$ and $\G(\VV_2, \EE)$ we see that $E_1 = E \upharpoonright V_1$ and $E_2 = E \upharpoonright V_2$. 
Thus by Lemma \ref{väritys}, we obtain $\chi(\G(\VV, \EE)) \le \chi(\G(\VV_1, \EE)) + \chi(\G(\VV_2, \EE))$. It must hold that $k_1 < \log(\chi(\G(\VV_1, \EE)))$ or
$k_2 < \log(\chi(\G(\VV_1, \EE)))$, since otherwise we would have
\begin{align*}
k &< \log(\chi(\G(\VV, \EE))) \le \log(\chi(\G(\VV_1, \EE)) + \chi(\G(\VV_2, \EE)))  \\ 
& \le \log(\chi(\G(\VV_1, \EE))) + \log(\chi(\G(\VV_2, \EE))) + 1 \le k_1 + k_2 + 1 = k.
\end{align*}
Thus D can choose the next position of the game, $(m_i, k_i, \VV_i, \EE)$, in such a way that 
$k_i < \log(\chi(\G(\VV_i, \EE)))$. By induction hypothesis
D has a winning strategy in the game $\EF{m_i,k_i}(\VV_i, \EE)$. 

Assume then that S begins the game with a right splitting move choosing the numbers $m_1,m_2,k_1, k_2 \in \Nset$ and the sets $\EE_1, \EE_2 \subseteq \EE$. Consider
now the graphs $\G(\VV, \EE)=(V, E)$, $\G(\VV, \EE_1)=(V_1, E_1)$ and $\G(\VV, \EE_2)=(V, E_2)$. Clearly $V_1 = V_2 = V$ and since 
$\EE_1 \cup \EE_2 = \EE$, we have $E_1 \cup E_2 = E$. Thus by Lemma~\ref{väritys}, we obtain  $\chi(\G(\VV, \EE)) \le \chi(\G(\VV, \EE_1))\chi(\G(\VV, \EE_2))$.
It must hold that $k_1 < \log(\chi(\G(\VV_1, \EE)))$ or $k_2 < \log(\chi(\G(\VV_1, \EE)))$, since otherwise we would have
\begin{align*}
k &< \log(\chi(\G(\VV, \EE))) \le \log(\chi(\G(\VV, \EE_1))\chi(\G(\VV, \EE_2))) \\
& = \log(\chi(\G(\VV, \EE_1))) + \log(\chi(\G(\VV, \EE_2))) \le k_1+k_2+1 = k.
\end{align*}
Thus D can again choose the next position of the game, $(m_i, k_i, \VV, \EE_i)$, in such a way that $k_i < \log(\chi(\G(\VV_i, \EE)))$. By induction hypothesis
D has a winning strategy in the game $\EF{m_i, k_i}(\VV, \EE_i)$.
\end{proof}

\begin{lemma}\label{winstrat2}
If $k < \tower(n-1)$ and $m \in \Nset$, then D has a winning strategy in the game $\EF{m,k}(\VV_n, \EE_n)$.
\end{lemma}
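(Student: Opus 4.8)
The plan is to deduce Lemma~\ref{winstrat2} from Lemma~\ref{winstrat} by showing that the graph $\G(\VV_n, \EE_n)$ is a complete graph on $\tower(n)$ vertices, hence has chromatic number $\tower(n)$. First I would identify the vertex set and edge set of $\G(\VV_n, \EE_n)$ using the definitions: by construction $V = \{(\M_a, a) \mid a \in V_{n+1}\}$ and $E = \{((\M_a,a),(\M_b,b)) \mid a,b \in V_{n+1},\ a \neq b\}$, since $\liitos\{(\M_a,a)\} \in \VV_n$ for every $a \in V_{n+1}$ and $\liitos\{(\M_a,a),(\M_b,b)\} \in \EE_n$ for every pair of distinct $a,b$. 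Thus $\G(\VV_n,\EE_n)$ is precisely the complete graph on the vertex set $\{(\M_a,a) \mid a \in V_{n+1}\}$, which by Lemma~\ref{tower} has $\tower(n)$ elements (one should note that the map $a \mapsto (\M_a,a)$ is injective, which follows from Lemma~\ref{notbisim}: distinct sets $a \neq b$ give non-$n$-bisimilar, hence non-isomorphic, pointed frames).

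Next I would record that the chromatic number of the complete graph $K_N$ is $N$: any coloring must assign distinct colors to all $N$ pairwise-adjacent vertices, so $\chi(\G(\VV_n,\EE_n)) = \tower(n)$. In particular $\chi(\G(\VV_n,\EE_n)) \geq 2$ for every $n \in \Nset$ (note $\tower(n) \geq 2$ for all $n \geq 1$, and for $n = 0$ we have $\tower(-1)$ undefined in the relevant inequality — but $\tower(n-1)$ with $n=0$ should be read with the convention already implicit; for $n \geq 1$ everything is fine, and I would assume $n \geq 1$ as the interesting case, treating $n=0$ or $n=1$ separately if needed since then $k < \tower(n-1) \leq 1$ forces $k = 0$).

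Then the proof concludes by invoking Lemma~\ref{winstrat} with $\VV = \VV_n$ and $\EE = \EE_n$. The hypothesis $k < \tower(n-1) = \log(\tower(n)) = \log(\chi(\G(\VV_n,\EE_n)))$ is exactly the side condition of Lemma~\ref{winstrat}, and $\chi(\G(\VV_n,\EE_n)) = \tower(n) \geq 2$ gives the other hypothesis. Hence D has a winning strategy in $\EF{m,k}(\VV_n,\EE_n)$ for every $m \in \Nset$.

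I expect no serious obstacle here: the lemma is essentially a bookkeeping application of the two preceding lemmas. The only point requiring a moment's care is confirming that $\G(\VV_n,\EE_n)$ really is the \emph{complete} graph — i.e., that every pair of distinct vertices is joined by an edge — which is immediate from the definition of $\EE_n$ as ranging over \emph{all} distinct pairs $a \neq b$ in $V_{n+1}$, together with the injectivity of $a \mapsto (\M_a,a)$ so that the vertex count matches $|V_{n+1}| = \tower(n)$ via Lemma~\ref{tower}. Computing $\log(\tower(n)) = \tower(n-1)$ is a one-line unfolding of the definition of $\tower$.
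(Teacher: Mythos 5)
Your proof is correct and follows essentially the same route as the paper: identify $\G(\VV_n,\EE_n)$ with the complete graph $K_{\tower(n)}$ via Lemma~\ref{tower}, conclude $\chi(\G(\VV_n,\EE_n)) = \tower(n)$, and apply Lemma~\ref{winstrat} using $\tower(n-1) = \log(\tower(n))$. The extra remarks on the injectivity of $a \mapsto (\M_a,a)$ and the small-$n$ cases are sensible bookkeeping that the paper leaves implicit.
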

\begin{proof}
By Lemma \ref{tower}, we have $|\VV_n| = \tower(n)$ and the set $\EE_n$ consists of all the pointed frames $\liitos\{(\M, w), (\M', w')\}$, where $(\M, w),
(\M', w') \in \VV_n$, $(\M, w) \neq (\M', w')$. Thus the graph $\G(\VV_n, \EE_n)$ is isomorphic with the complete graph $K_{\tower(n)}$. Therefore we obtain
\[
\chi(\G(\VV_n, \EE_n)) = \chi(K_{\tower(n)}) = \tower(n).
\]
By the assumption, $k < \tower(n-1) = \log(\tower(n)) = \log(\chi(\G(\VV_n, \EE_n)))$, so by Lemma \ref{winstrat}, D has a winning strategy in the game 
$\EF{m,k}(\VV_n, \EE_n)$.
\end{proof}

\begin{theorem}\label{ML}
Let $n \in \Nset$. If a formula $\theta_n \in \ML$ separates the classes $\AA_n$ and $\BB_n$, then $s(\theta_n) \ge \tower(n-1)$.
\end{theorem}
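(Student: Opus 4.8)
The plan is to combine the upper bound coming from the formula size game (Theorem~\ref{peruslause}) with the winning strategy for D established in Lemma~\ref{winstrat2}, mediated by the fact that $\VV_n \subseteq \AA_n$ and $\EE_n \subseteq \BB_n$. First I would verify this inclusion: an element $\liitos\{(\M_a,a)\}$ of $\VV_n$ is a pointed frame whose successor set is the singleton $\{(\M_a,a)\}$, so trivially any two successors are $n$-bisimilar, putting it in $\AA_n$; an element $\liitos\{(\M_a,a),(\M_b,b)\}$ of $\EE_n$ has the two successors $(\M_a,a)$ and $(\M_b,b)$ with $a \neq b$, which are \emph{not} $n$-bisimilar by Lemma~\ref{notbisim}, so it lies in $\BB_n$. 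Hence any formula $\theta_n \in \ML$ separating $\AA_n$ and $\BB_n$ also separates the smaller sets $\VV_n$ and $\EE_n$.

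Next I would argue by contraposition. Suppose $\theta_n$ separates $\AA_n$ and $\BB_n$ but $s(\theta_n) < \tower(n-1)$. Setting $m := \ms(\theta_n)$ and $k := \cs(\theta_n)$, we have $k \le s(\theta_n) < \tower(n-1)$, and since $\theta_n$ separates $\VV_n$ and $\EE_n$ with $\ms(\theta_n) \le m$ and $\cs(\theta_n) \le k$, condition $\sep_{m,k}$ holds for the pair $(\VV_n,\EE_n)$. By Theorem~\ref{peruslause}, S then has a winning strategy in $\EF{m,k}(\VV_n,\EE_n)$. But by Lemma~\ref{winstrat2}, since $k < \tower(n-1)$, player D has a winning strategy in the very same game $\EF{m,k}(\VV_n,\EE_n)$. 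This is impossible, so $s(\theta_n) \ge \tower(n-1)$, as claimed.

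The only genuine subtlety is the inclusion $\VV_n \subseteq \AA_n$, $\EE_n \subseteq \BB_n$, and in particular that it is the \emph{same} $n$ throughout: the definition of $\AA_n$ requires successors to be $n$-bisimilar, the construction of $\EE_n$ uses distinct sets $a,b \in V_{n+1}$, and Lemma~\ref{notbisim} is stated exactly for $a,b \in V_{n+1}$ giving $(\M_a,a)\not\bisim_n(\M_b,b)$ — so the indices line up precisely and no shift is needed. Everything else is a routine application of the two main theorems of Section~3 together with the combinatorial Lemma~\ref{winstrat2}; there is no nontrivial calculation, since the exponential-tower bookkeeping has already been discharged inside Lemma~\ref{tower} and Lemma~\ref{winstrat2}. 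I do not anticipate any real obstacle; the proof is essentially a two-line deduction once the inclusions are noted.
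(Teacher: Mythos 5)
Your proposal is correct and follows essentially the same route as the paper's own proof: establish $\VV_n \subseteq \AA_n$ and $\EE_n \subseteq \BB_n$ (the latter via Lemma~\ref{notbisim}), then derive a contradiction between S's winning strategy from Theorem~\ref{peruslause} and D's winning strategy from Lemma~\ref{winstrat2} in the game $\EF{m,k}(\VV_n,\EE_n)$ with $m=\ms(\theta_n)$, $k=\cs(\theta_n)$. No issues.
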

\begin{proof}
Assume that a formula $\theta_n \in \ML$ separates the classes $\AA_n$ and $\BB_n$. Since for every pointed frame $(M, w) \in \VV_n$ the set $\seur{\M}{w}$ 
has only one element, we have $\VV_n \subseteq \AA_n$. On the other hand, every pointed frame in the set $\EE_n$ is of the form $\liitos\{(\M_a, a), (\M_b, b)\}$,
where $a, b \in V_{n+1}$, $a \neq b$. By Lemma~\ref{notbisim},  $(\M_a, a)\not\bisim_n(\M_b, b)$ so $\EE_n \subseteq \BB_n$.

Assume for contradiction that $\s(\theta_n) < \tower(n-1)$. By Theorem \ref{peruslause}, S has a winning strategy in the game $\EF{m,k}(\VV_n, \EE_n)$ for $m = \ms(\theta_n)$ and $k = \cs(\theta_n)$. On the other hand, $k < \tower(n-1)$, whence by Lemma \ref{winstrat2}, D has a winning strategy in the same game.
\end{proof}

We now have everything we need for proving the nonelementary succinctness of $\FO$ 
over $\ML$. By Lemma \ref{FO}, for each $n \in \Nset$ there is a formula $\phi_n(x) \in \FO$ such that $\phi_n$ separates the classes $\AA_n$ and $\BB_n$ with $s(\phi)=\ordo(2^n)$. 
On the other hand by Corollary \ref{Rosen}, there is an equivalent formula $\vartheta_n \in \ML$, but by Theorem \ref{ML} the size of $\vartheta_n$ 
must be at least $\tower(n-1)$. 

\begin{corollary}
Bisimulation invariant $\FO$ is  nonelementarily more succinct than $\ML$.
\end{corollary}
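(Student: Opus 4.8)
The plan is simply to assemble the three main results of Sections~\ref{sec-property} and~\ref{nelonen} into a single separation statement. Fix the family of $\FO$-formulas $\phi_n(x)$ supplied by Lemma~\ref{FO}: each $\phi_n$ separates the classes $\AA_n$ and $\BB_n$, and $\s(\phi_n) = \ordo(2^n)$, so the sizes of the $\phi_n$ grow only exponentially in $n$, i.e.\ by an elementary function. By the next lemma each $\phi_n$ defines an $(n{+}1)$-bisimulation invariant property, so by the $n$-bisimulation version of the van Benthem characterization theorem it is equivalent to some $\ML$-formula; let $\vartheta_n$ be a shortest such formula, exactly as in Corollary~\ref{Rosen}.

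First I would note that $\vartheta_n$ inherits the separation property: since $\MM(\vartheta_n)=\MM(\phi_n)$, we have $\AA_n \subseteq \MM(\vartheta_n)$ and $\BB_n \subseteq \overline{\MM(\vartheta_n)}$, so $\vartheta_n$ separates $\AA_n$ and $\BB_n$. Theorem~\ref{ML} then applies directly and gives $\s(\vartheta_n) \ge \tower(n-1)$. Hence, for every $n$, there is a bisimulation invariant $\FO$-formula of size $\ordo(2^n)$ whose shortest equivalent $\ML$-formula has size at least $\tower(n-1)$.

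It remains to read off from this the claim that bisimulation invariant $\FO$ is \emph{nonelementarily more succinct} than $\ML$, i.e.\ that there is no elementary function $f:\Nset\to\Nset$ such that every $\FO$-formula $\psi(x)$ with $\MM(\psi)$ bisimulation invariant has an equivalent $\ML$-formula of size at most $f(\s(\psi))$. Indeed, were such an $f$ to exist, then for a suitable constant $c$ and all $n$ we would obtain $\tower(n-1) \le \s(\vartheta_n) \le f(\s(\phi_n)) \le f(c\cdot 2^n)$; but $n \mapsto f(c\cdot 2^n)$ is a composition of elementary functions and hence elementary, while $n\mapsto\tower(n-1)$ eventually dominates every elementary function, a contradiction. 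I do not anticipate any real obstacle here: the corollary is pure bookkeeping on top of Lemma~\ref{FO}, Corollary~\ref{Rosen}, and Theorem~\ref{ML}, and the one point meriting a line of care is pinning down the informal phrase ``nonelementarily more succinct'' precisely enough that an exponential upper bound on $\s(\phi_n)$ together with a tower lower bound on $\s(\vartheta_n)$ genuinely witnesses it.
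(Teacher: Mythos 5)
Your proposal is correct and follows essentially the same route as the paper: combine Lemma~\ref{FO} (exponential-size $\FO$-formulas $\phi_n$ separating $\AA_n$ and $\BB_n$), Corollary~\ref{Rosen} (existence of equivalent $\ML$-formulas $\vartheta_n$), and Theorem~\ref{ML} (the $\tower(n-1)$ lower bound on $\s(\vartheta_n)$). The only difference is that you spell out the formal meaning of ``nonelementarily more succinct'' and verify that an exponential upper bound against a tower lower bound witnesses it, which the paper leaves implicit.
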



\bibliographystyle{aiml16}
\bibliography{aiml16}

\end{document}